\theoremstyle{plain}
\newtheorem{theorem}{Theorem}[section]
\newtheorem{lemma}[theorem]{Lemma}
\theoremstyle{definition}
\newtheorem{definition}[theorem]{Definition}
\newtheorem{example}[theorem]{Example}
\theoremstyle{remark}
\newtheorem{remark}[theorem]{Remark}
\newcommand{\N}{\mathbb{N}}                   
\newcommand{\dl}{\delta}                      
\newcommand{\dlp}{\dl^+}                      
\newcommand{\dlm}{\dl^-}                      
\newcommand{\M}{(\mathcal{M})}
\newcommand{\A}{(\mathcal{A})}
\numberwithin{equation}{section}
\begin{document}

\title{On the Meyniel condition for hamiltonicity in bipartite digraphs}
\author{Janusz Adamus}
\address{J.Adamus, Department of Mathematics, The University of Western Ontario, London, Ontario N6A 5B7 Canada
         - and - Institute of Mathematics, Faculty of Mathematics and Computer Science, Jagiellonian University,
         ul. {\L}ojasiewicza 6, 30-348 Krakow, Poland}
\email{jadamus@uwo.ca}
\author{Lech Adamus}
\address{L. Adamus, AGH University of Science and Technology, Faculty of Applied Mathematics, al. Mickiewicza 30, 30-059 Krakow, Poland}
\email{adamus@agh.edu.pl}
\thanks{The authors' research was partially supported by Natural Sciences and Engineering Research Council of Canada (J. Adamus) and Polish Ministry of Science and Higher Education (L. Adamus).}
\subjclass[2000]{05C20, 05C38, 05C45}
\keywords{digraph, bipartite digraph, cycle, hamiltonicity, degree condition}

\begin{abstract}
We prove a sharp Meyniel-type criterion for hamiltonicity of a balanced bipartite digraph:
For $a\geq2$, a bipartite digraph $D$ with colour classes of cardinalities $a$ is hamiltonian if $d_D(u)+d_D(v)\geq3a+1$ whenever $uv\notin A(D)$ and $vu\notin A(D)$. As a consequence, we obtain a sharp sufficient condition for hamiltonicity in terms of the minimal degree: a balanced bipartite digraph $D$ on $2a$ vertices is hamiltonian if $\dl(D)\geq(3a+1)/2$.
\end{abstract}
\maketitle


\section{Introduction}
\label{sec:intro}

The main goal of this article is to prove a Meyniel-type sufficient condition for hamiltonicity of a balanced bipartite digraph.
We consider digraphs in the sense of \cite{BT}, and use standard graph theoretical terminology and notation (see Section~\ref{subsec:not} for details).

Our object of study in the present article are bipartite digraphs satisfying the following Meyniel-type condition (cf. Thm.\,\ref{thm:Mey}).

\begin{definition}
\label{def:3a+1}
Consider a balanced bipartite digraph $D$ with colour classes of cardinalities $a$. We will say that $D$ satisfies \emph{condition $\M$} when
\[
d_D(u)+d_D(v)\geq 3a+1
\]
for every pair of distinct vertices $u,v\in V(D)$ such that $uv\notin A(D)$ and $vu\notin A(D)$.
\end{definition}

Our main result is the following:

\begin{theorem}
\label{thm:main}
Let $D$ be a balanced bipartite digraph with colour classes of cardinalities $a$, where $a\geq2$.
If $D$ satisfies condition $\M$, then $D$ contains an oriented cycle of length $2a$.
\end{theorem}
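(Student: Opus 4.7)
The plan is to proceed by contradiction using the standard longest-cycle method. Write $A$ and $B$ for the two colour classes of $D$, each of cardinality $a$, and assume $D$ satisfies condition $\M$ but contains no cycle of length $2a$. Two preparatory facts would be established first. Firstly, $D$ must be strongly connected: if $V(D) = V_1 \cup V_2$ were a non-trivial partition with no arcs from $V_2$ to $V_1$, then a judicious choice of two non-adjacent vertices (one in each $V_i$, chosen from the appropriate colour classes so as to minimise degrees) would force their degree sum below $3a+1$. Secondly, by a rotation argument applied to a longest path, $D$ contains a Hamiltonian path; this supplies a convenient ``spine'' along which cycles can be grown.

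Take a longest cycle $C = v_0 v_1 \ldots v_{2k-1} v_0$ with $k < a$, and set $R = V(D)\setminus V(C)$. Strong connectivity guarantees the existence of a vertex $x \in R$ with arcs in both directions between $x$ and $V(C)$; without loss of generality $x \in A$, so all arcs incident to $x$ have their other endpoint in $B$. The aim is to find a vertex $y \in R \cap B$ with $xy \notin A(D)$ and $yx \notin A(D)$ whose degree, combined with that of $x$, violates condition $\M$.

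The crux is the classical bypass analysis along $C$. For every arc $xv_i$ with $v_i \in N_C^+(x)$, the arc $v_{i-1}x$ cannot exist, since otherwise $v_{i-1}xv_i$ would replace $v_{i-1}v_i$ in $C$; symmetric restrictions apply to predecessors/successors of vertices of $N_C^-(x)$. More refined chord arguments (insertion of a short $x$-to-$R$-to-$C$ detour) exclude many potential arcs between $R$ and the vertices adjacent on $C$ to $N_C^{\pm}(x)$. A pigeonhole argument on the $a-k$ vertices of $R\cap B$, combined with condition $\M$ applied to the pairs in $R\cap A \times R\cap B$, should produce the required $y$. Bounding $d_D(x) + d_D(y)$ by adding the admissible arcs to $V(C)$ (using the bypass exclusions) and to $R$ (of size $2(a-k)$) should then yield $d_D(x) + d_D(y) \leq 3a$, contradicting $\M$.

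The main obstacle is that the threshold $3a+1$ is sharp, so every inequality in the counting must be near-tight, leaving little slack. The most delicate point will be the simultaneous selection of $x \in R \cap A$ and $y \in R \cap B$ so that the bypass exclusions for $x$ and $y$ combine without overlap to give the needed upper bound on $d_D(y)$; when $a-k$ is small, in particular $a-k \in \{1,2\}$, the pigeonhole breaks down and $R$ is too thin to supply a non-adjacent partner directly. These boundary cases will almost certainly require separate ad hoc treatment, most naturally by exploiting the Hamiltonian path to rotate $C$ and re-select the longest cycle with more favourable chord structure.
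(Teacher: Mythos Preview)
Your plan has a genuine gap: the single longest-cycle bypass count is simply too weak against the threshold $3a+1$. Suppose $C$ has length $2k$ and $x\in R\cap A$, $y\in R\cap B$ are connected by a path in $R$. The standard insertion argument gives only $d_C^-(x)+d_C^+(y)\le k$, while $d_C^+(x)+d_C^-(y)\le 2k$ trivially and $d_R(x)+d_R(y)\le 4(a-k)$ (minus a constant if $x,y$ are non-adjacent). Summing yields $d_D(x)+d_D(y)\le 4a-k+O(1)$, which contradicts $\ge 3a+1$ only when $k$ is within a constant of $a$. Since the longest cycle may have length as small as $a$ (and the paper has to work to establish even that much), the counting falls short by roughly $a/2$. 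No amount of ad hoc tinkering in the boundary cases $a-k\in\{1,2\}$ repairs this; the shortfall is structural.

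Relatedly, your focus on finding a \emph{cross-colour} non-adjacent pair $x,y$ is misdirected. As Example~1.13 in the paper shows, there exist strongly connected non-hamiltonian bipartite tournaments, for which condition~$\M$ is vacuous on cross-colour pairs; the real force of~$\M$ comes from pairs in the \emph{same} colour class (which are automatically non-adjacent). The paper exploits this by summing~$\M$ over two such pairs to obtain a four-vertex inequality (condition~$\A$: $d(x')+d(y')+d(x'')+d(y'')\ge 6a'+2$) that, crucially, is inherited by the digraph left over after deleting any collection of maximal $M$-compatible cycles. This hereditary property drives a recursive decomposition $V(D)=V(C_1)\cup\cdots\cup V(C_k)\cup V^r$ where each $C_{j+1}$ covers at least half of what remains; an inductive argument then threads an $M$-compatible path through all the later cycles and glues it back into some $C_j$, contradicting maximality. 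The matching $M$ (whose existence is itself a lemma) and the four-vertex condition are the essential ingredients your outline lacks.
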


There are numerous sufficient conditions for existence of hamiltonian cycles in digraphs (see \cite{BT}). In this article, we will be concerned with the degree conditions. 
For general digraphs, let us recall the following four classical results.

\begin{theorem}[{Ghouila-Houri, 1960, \cite{G}}]
\label{thm:G}
Let $D$ be a strongly connected digraph on $n$ vertices, where $n\geq3$. If $\dl(D)\geq n$, then $D$ contains an oriented cycle of length $n$.
\end{theorem}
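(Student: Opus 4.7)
I would prove the theorem by contradiction using a longest-cycle analysis. Suppose $D$ is strongly connected and $d^+(v)+d^-(v)\ge n$ for every $v\in V(D)$, yet $D$ has no Hamilton cycle. Fix a longest oriented cycle $C=y_0y_1\cdots y_{k-1}$ (indices mod $k$) in $D$, and assume for contradiction that $k<n$. Set $U=V(D)\setminus V(C)$, so $|U|=n-k\ge1$.

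\textbf{Insertion obstruction.} For every $v\in U$, maximality of $C$ forbids simultaneous arcs $y_iv,\,vy_{i+1}\in A(D)$, since $y_0\cdots y_ivy_{i+1}\cdots y_{k-1}y_0$ would be a cycle of length $k+1$. Hence the forward cyclic shift of $N^-(v)\cap V(C)$ is disjoint from $N^+(v)\cap V(C)$, yielding $|N^-(v)\cap V(C)|+|N^+(v)\cap V(C)|\le k$. Combined with the trivial bound $|N^+(v)\cap U|+|N^-(v)\cap U|\le 2(n-k-1)$ and the degree hypothesis at $v$, this already forces $k\le n-2$, so $|U|\ge 2$.

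\textbf{Shortest detour through $U$.} Strong connectivity supplies a path $y_au_1u_2\cdots u_sy_b$ with $u_1,\dots,u_s\in U$; I would choose $s$ minimal. Minimality forces $N^-(u_j)\cap V(C)=\emptyset$ for $j\ge 2$ and $N^+(u_j)\cap V(C)=\emptyset$ for $j\le s-1$, else the detour could be shortened. A generalized insertion argument then shows: whenever $y_iu_1\in A(D)$ and $u_sy_m\in A(D)$ with $m-i\in\{1,\dots,s\}\pmod{k}$, splicing the detour into $C$ yields a cycle of length $k+s-(m-i-1)>k$, contradicting maximality. Hence $N^+(u_s)\cap V(C)$ avoids the set $\bigcup_{y_i\in N^-(u_1)\cap V(C)}\{y_{i+1},\dots,y_{i+s}\}$, a much stronger constraint than Step~2's inequality.

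\textbf{Final degree count and main obstacle.} When $s\ge 2$, minimality gives $d^+(u_1)\le n-k-1$ and $d^-(u_s)\le n-k-1$, so the degree hypothesis at $u_1$ and $u_s$ forces $d^-(u_1),\,d^+(u_s)\ge k+1$, whence $|N^-(u_1)\cap V(C)|$ and $|N^+(u_s)\cap V(C)|$ are each $\ge 2k-n+2$; carefully combined with the union-avoidance above and the cyclic structure of $C$, this should push $k$ to at least $n$, contradicting $k<n$. The principal obstacle is the degenerate case $s=1$, where $u_1=u_s$ and the minimality vanishings are vacuous; here I would exploit $|U|\ge 2$ (from Step 2) to invoke a \emph{second} off-cycle vertex and a secondary detour, or refine the basic inequality by tracking which arc of $C$ is used to reach the detour, following the classical pattern of Meyniel-type extremal arguments for digraphs.
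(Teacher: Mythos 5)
The paper contains no proof of Theorem~\ref{thm:G}: it is quoted as a classical result of Ghouila-Houri, and the only justification offered is the remark after Theorem~\ref{thm:Mey} that it follows from Meyniel's theorem (if $\dl(D)\geq n$ then $d_D(u)+d_D(v)\geq 2n\geq 2n-1$ for every pair, and strong connectivity is assumed in both statements). So your attempt must stand on its own, and as written it does not.

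The set-up is sound: the insertion obstruction $|N^-(v)\cap V(C)|+|N^+(v)\cap V(C)|\leq k$, the consequence $k\leq n-2$, the shortest detour, and the vanishing of $N^-(u_j)\cap V(C)$ for $j\geq2$ and $N^+(u_j)\cap V(C)$ for $j\leq s-1$ are all correct. But the final step is asserted rather than derived, and the natural way to carry it out fails numerically. In the case $s\geq2$ you get $|N^-(u_1)\cap V(C)|\geq 2k-n+2$ and $|N^+(u_s)\cap V(C)|\geq 2k-n+2$; the union-avoidance constraint guarantees in general (e.g.\ when $N^-(u_1)\cap V(C)$ is a cyclic interval) only $|N^-(u_1)\cap V(C)|+|N^+(u_s)\cap V(C)|\leq k-s+1$, and combining the two gives $3k\leq 2n-5$ --- an \emph{upper} bound on $k$ that is perfectly consistent with $k\leq n-2$, not a contradiction. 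Worse, when $k\leq (n-2)/2$ the lower bound $2k-n+2$ is nonpositive and the whole computation yields nothing. Separately, the case $s=1$, which you flag yourself, is not a marginal degeneracy but the generic situation (nothing prevents every off-cycle vertex from having both in- and out-neighbours on $C$), and ``invoke a second off-cycle vertex'' is a plan, not an argument. The missing idea is the one used in the standard proofs of Ghouila-Houri and Meyniel: take a longest path $P$ in $D-V(C)$ rather than a shortest detour, so that by maximality the in-neighbourhood of the initial vertex and the out-neighbourhood of the terminal vertex of $P$ are confined to $V(C)\cup V(P)$, and then use a multi-insertion lemma to splice \emph{all} of $P$ into $C$ segment by segment. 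Without some such mechanism your degree counts cannot reach a contradiction.
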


(A digraph is called \emph{strongly connected} when, for every (ordered) pair of distinct vertices $u$ and $v$, $D$ contains an oriented path originating in $u$ and terminating in $v$.)

\begin{theorem}[{Nash-Williams, 1969, \cite{NW}}]
\label{thm:NW}
Let $D$ be a digraph on $n$ vertices, where $n\geq3$. If $\dlp(D)\geq n/2$ and $\dlm(D)\geq n/2$, then $D$ contains an oriented cycle of length $n$.
\end{theorem}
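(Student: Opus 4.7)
The plan is to argue by contradiction. Assume $D$ satisfies $\delta^+,\delta^-\ge n/2$ but contains no Hamilton cycle. Since $\delta^+(D)\ge 1$, $D$ contains cycles; let $C=v_0v_1\ldots v_{k-1}v_0$ be a longest cycle and suppose $k<n$. I would first verify strong connectivity: if $V(D)=X\sqcup Y$ with no arc from $X$ to $Y$, then $d^+(x)\le|X|-1$ for every $x\in X$ and $d^-(y)\le|Y|-1$ for every $y\in Y$, forcing $|X|+|Y|\ge n+2$, a contradiction. In particular, every $u\in H:=V(D)\setminus V(C)$ has attachments to $V(C)$ in both directions.

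The main engine is an \emph{insertion} argument exploiting maximality of $C$. For any $u\in H$, if $v_i\to u$ and $u\to v_{i+1}$ both hold for some index $i$ (mod $k$), then $u$ inserts into $C$ between $v_i$ and $v_{i+1}$, producing a longer cycle. Hence $N_C^+(u)$ and the cyclic shift $N_C^-(u)+1$ are disjoint subsets of $V(C)$, so $d_C^+(u)+d_C^-(u)\le k$. Combining with $d^{\pm}(u)\ge n/2$ and the trivial bound $d_H^{\pm}(u)\le n-k-1$ yields $2k-n+2\le k$, i.e., $k\le n-2$.

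To exclude $k=n-2$, take a longest path $P=u_0u_1\ldots u_l$ in the subdigraph induced on $H$. The analogous \emph{ear} trick --- if $v_a\to u_0$ and $u_l\to v_{a+1}$ then $P$ splices into $C$ to give a cycle of length $k+l+1>k$ --- yields $d_C^-(u_0)+d_C^+(u_l)\le k$. Maximality of $P$ in $H$ bounds $d_H^-(u_0),d_H^+(u_l)\le l$; the inequality $n-2l\le k$ then forces $l\ge(n-k)/2=1$. Together with $l\le|H|-1=1$ this gives $l=1$, and a short degree count forces the reverse arc $u_1\to u_0$ as well. All four insertion/ear inequalities then saturate simultaneously, pinning down $N_C^+(u_0)=N_C^+(u_1)=:A$, $N_C^-(u_0)=N_C^-(u_1)=:B$, and the cyclic partition $A\sqcup(B+1)=V(C)$.

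Finally, since $|B|=n/2-1$ is neither $0$ nor $k$, the set $B$ is not invariant under cyclic shift, so there is an index $i$ with $v_i\in A$ and $v_{i-2}\in B$. Then $v_{i-2}\to u_0\to u_1\to v_i\to v_{i+1}\to\cdots\to v_{i-2}$ is an oriented cycle of length $k+2=n$, the desired contradiction. The main obstacle is clearly the tight boundary case $k=n-2$, where every inequality must saturate and one has to extract enough rigidity from the equality cases to exhibit an explicit Hamilton cycle; parity effects at the smallest $n$ and the degree-bookkeeping between $C$ and $H$ require care but are routine.
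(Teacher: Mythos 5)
The paper does not actually prove this theorem: it is quoted as a classical result and observed to follow from Meyniel's theorem (Theorem~\ref{thm:Mey}), since under your hypotheses every vertex has total degree at least $n$, hence every pair has degree sum $\geq 2n\geq 2n-1$, and strong connectivity holds by exactly the partition argument you give. Your direct attempt, however, has a genuine gap. The single-vertex insertion bound $d_C^+(u)+d_C^-(u)\le k$, combined with $d^{\pm}(u)\ge n/2$ and $d_H^{\pm}(u)\le n-k-1$, yields only the \emph{upper} bound $k\le n-2$ on the length of a longest cycle; it does not show $k\ge n-2$. You then treat $k=n-2$ as the sole remaining case (``To exclude $k=n-2$\dots'', ``$l\le|H|-1=1$''), but every value $k\le n-3$, i.e.\ $|H|\ge3$, is left untouched. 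For $|H|\ge3$ your ear inequality gives only $l\ge(n-k)/2$, which is perfectly compatible with $l\le|H|-1$ (for $|H|=4$ it allows $l\in\{2,3\}$, so the longest path need not even cover $H$), and the rigidity analysis that closes your argument (the forced arc $u_1\to u_0$, the identities $N_C^{\pm}(u_0)=N_C^{\pm}(u_1)$, the partition $A\sqcup(B+1)=V(C)$) is specific to $|H|=2$. This missing range of $k$ is where the real content of the theorem lives: one needs a multi-insertion argument that distributes the vertices of a path in $H$ among several gaps of $C$, as in the Bondy--Thomassen proof of Meyniel's theorem, not a single ear splice.

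The case you do treat, $|H|=2$, is essentially correct: for odd $n$ the bound $d_C^{\pm}(u_i)\ge\lceil n/2\rceil-1$ already makes the insertion sum exceed $k$, and for even $n$ your saturation analysis goes through. There is only a harmless miscount at the end: the cycle $v_{i-2},u_0,u_1,v_i,\dots,v_{i-2}$ omits $v_{i-1}$ and so has length $k+1$, not $k+2$; it is not a Hamilton cycle, but it still contradicts the maximality of $C$, which is all you need at that point.
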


\begin{theorem}[{Woodall, 1972, \cite{W}}]
\label{thm:W}
Let $D$ be a digraph on $n$ vertices, where $n\geq3$. If $d_D^+(u)+d_D^-(v)\geq n$ for every pair of distinct vertices $u,v\in V(D)$ satisfying $uv\notin A(D)$, then $D$ contains an oriented cycle of length $n$.
\end{theorem}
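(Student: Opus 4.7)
The plan is to prove Woodall's theorem by first establishing strong connectivity and then analysing a longest oriented cycle to show it must be Hamilton.

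\emph{Strong connectivity.} I would first observe that the hypothesis forces $D$ to be strongly connected. If not, there is a proper nonempty partition $V(D)=A\sqcup B$ with no arc from $A$ to $B$. For any $u\in A$ and $v\in B$ we have $uv\notin A(D)$, so the hypothesis gives $d^+_D(u)+d^-_D(v)\geq n$. However $N^+(u)\subseteq A\setminus\{u\}$ and $N^-(v)\subseteq B\setminus\{v\}$, yielding $d^+_D(u)+d^-_D(v)\leq(|A|-1)+(|B|-1)=n-2$, a contradiction.

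\emph{Longest cycle is Hamilton.} Since $n\geq 3$ and $D$ is strongly connected, $D$ contains at least one oriented cycle. Let $C=x_0x_1\cdots x_{k-1}x_0$ be a longest one; I would assume for contradiction that $k<n$ and set $H:=V(D)\setminus V(C)\neq\emptyset$. Using strong connectivity, pick a path $\pi=x_a u_1 u_2\cdots u_r x_b$ with $u_1,\ldots,u_r\in H$, $r\geq 1$, $x_a,x_b\in V(C)$, and $r$ minimum. The maximality of $C$ forces the $C$-segment from $x_a$ to $x_b$ along $C$ to have length strictly greater than $r+1$: otherwise, replacing that segment by $\pi$ would yield a longer oriented cycle, contradicting the choice of $C$. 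Likewise, internal arcs of $\pi$ cannot be shortcut by chords into $V(C)$ at inopportune places, since that would contradict the minimality of $r$.

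\emph{Degree accounting and obstacle.} Finally, I would pick a pair $(u,x)$ with $u\in\{u_1,\ldots,u_r\}$ and $x\in V(C)$ suitably far from $\{x_a,x_b\}$ along $C$; minimality of $r$ together with maximality of $C$ will typically preclude both $ux$ and $xu$ being arcs. The hypothesis then delivers $d^+_D(u)+d^-_D(x)\geq n$ (or the analogous inequality with roles swapped). On the other hand, the structural restrictions from the previous step force the out-neighbourhood of $u$ and the in-neighbourhood of $x$ into a small number of admissible slots (essentially, out-arcs of $u$ must land in a short initial portion of the $C$-segment bypassed by $\pi$, and in-arcs of $x$ must come from a complementary portion), producing an upper bound on $d^+_D(u)+d^-_D(x)$ that falls below $n$ and yielding the desired contradiction. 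The main obstacle is precisely this counting argument in Step~4: one must coordinate the choice of pair $(u,x)$, the orientation of the degree-sum inequality, and the case analysis when $|H|$ is large so that the non-insertability constraints translate crisply into a sub-$n$ upper bound; a clean case split between $|H|=1$ (single outside vertex, no detour chain) and $|H|\geq 2$ should keep the bookkeeping manageable.
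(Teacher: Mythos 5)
Your Step 1 (strong connectivity) is correct and cleanly argued, and the setup in Step 2 (a longest cycle $C$, a detour path $\pi$ through $H$ with a minimal number $r$ of internal vertices, and the observation that the bypassed segment of $C$ must be longer than $\pi$) is a standard and reasonable opening. But the proof stops exactly where the theorem lives: Step 3 is not an argument but a description of an argument you hope exists, and you say so yourself (``the main obstacle is precisely this counting argument''). Concretely, three things are missing. First, to invoke the hypothesis on a pair $(u,x)$ with $u$ internal to $\pi$ and $x\in V(C)$ you must first establish $ux\notin A(D)$ (the condition is one-sided, on the ordered pair), and which pairs are guaranteed non-adjacent depends delicately on $r$: minimality of $r$ only excludes out-arcs from $u_1,\dots,u_{r-1}$ to $C$ and in-arcs from $C$ to $u_2,\dots,u_r$, so for $r=1$ the single internal vertex may have arcs in both directions to $C$ and no non-adjacency is immediate. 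Second, even when $ux\notin A(D)$ is secured, the inequality $d_D^+(u)+d_D^-(x)\geq n$ has to be beaten by an upper bound, and nothing in your setup restricts $d_D^-(x)$ for a generic $x\in V(C)$ ``far from $x_a,x_b$'': in-arcs to $x$ from other cycle vertices and from $H$ are not excluded by maximality of $C$ or minimality of $r$, so $d_D^-(x)$ can a priori be as large as $n-1$ and the claimed ``small number of admissible slots'' is unsubstantiated. Third, the known proofs do not work with a single pair: Woodall's original argument, and the insertion-based proofs of Meyniel's theorem from which this statement also follows, require either summing degree inequalities over a whole family of pairs indexed by positions on $C$, or an insertion lemma showing that every vertex of $H$ can be absorbed into $C$. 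Without one of these mechanisms the contradiction does not materialize.

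For what it is worth, the paper does not prove this statement: it quotes it as a classical theorem of Woodall with a citation, and only remarks that it follows from Meyniel's theorem. Your Step 1, combined with the observation that $uv\notin A(D)$ and $vu\notin A(D)$ together imply
$d_D(u)+d_D(v)=\bigl(d_D^+(u)+d_D^-(v)\bigr)+\bigl(d_D^+(v)+d_D^-(u)\bigr)\geq 2n>2n-1$,
gives exactly that reduction --- but it of course presupposes Meyniel's theorem rather than proving anything from scratch.
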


\begin{theorem}[{Meyniel, 1973, \cite{Mey}}]
\label{thm:Mey}
Let $D$ be a strongly connected digraph on $n$ vertices, where $n\geq3$. If $d_D(u)+d_D(v)\geq2n-1$ for any two vertices $u$ and $v$ such that $uv\notin A(D)$ and $vu\notin A(D)$, then $D$ contains an oriented cycle of length $n$.
\end{theorem}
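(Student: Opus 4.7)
The plan is to argue by contradiction. Assume $D$ is strongly connected on $n\geq 3$ vertices with $d_D(u)+d_D(v)\geq 2n-1$ for every pair of distinct nonadjacent vertices, yet contains no cycle of length $n$. Let $C$ be a longest cycle in $D$, of length $k<n$, and write $H=V(D)\setminus V(C)$, so that $1\leq |H|=n-k$. The strategy is to exploit strong connectivity to force enough structure around $C$ that the Meyniel hypothesis produces a numerical contradiction against either the maximality of $C$ or the minimality of an auxiliary ear.

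The first step is to pick, by strong connectivity, a \emph{shortest} directed path $Q=u_0x_1x_2\dots x_su_1$ with $u_0,u_1\in V(C)$, $x_1,\dots,x_s\in H$, and $s\geq 1$. The minimality of $Q$ restricts the internal vertices severely: $x_i$ has no in-arc from $V(C)$ for $i\geq 2$ and no out-arc to $V(C)$ for $i\leq s-1$, since any such arc would shortcut $Q$. Independently, the maximality of $C$ yields the \emph{insertion obstruction}: for every $x\in H$ and every consecutive pair $(u_j,u_{j+1})$ on $C$, the arcs $u_jx$ and $xu_{j+1}$ cannot both be present, else one obtains the longer cycle $u_1\dots u_j x u_{j+1}\dots u_k u_1$. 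Summing over $j$ this yields $d_D(x,V(C))\leq k$ for each $x\in H$, and hence $d_D(x)\leq k+2(n-k-1)=2n-k-2$.

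Next I would apply the Meyniel condition to a well-chosen nonadjacent pair. The natural candidates are $x_1$ (or $x_s$) together with a cycle vertex $u_j$ chosen to lie outside the attachment of $Q$: the ear-minimality restrictions ensure that $x_1$ is genuinely nonadjacent, in both arc directions, to every $u_j$ other than the endpoint $u_0$, so Meyniel applies and forces $d_D(u_j)\geq (2n-1)-d_D(x_1)\geq k+1$. Averaging these lower bounds over the many admissible $u_j$ overloads the arc count between $V(C)$ and $H\cup V(C)$: the excess arcs must create either an $xu_{j+1}$--$u_jx$ insertion pair at some $x\in H$ (contradicting maximality of $C$) or a new short ear (contradicting minimality of $Q$). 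A symmetric argument works with $x_s$ replacing $x_1$, covering the out-side.

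The main obstacle I anticipate is the sharpness of the degree bound: a crude one- or two-vertex comparison only rules out the case $k=n-1$, so to push the argument through for general $k<n$ one must refine it by a case split on the ear length $s$ (the case $s=1$, where $Q$ is a single "chord" $u_0xu_1$, is easier, whereas $s\geq 2$ requires additional auxiliary ears and a rotation of $Q$), and by carefully tracking arcs between the residual set $H\setminus\{x_1,x_s\}$ and $V(C)$. This delicate bookkeeping---iteratively shrinking $H$ by absorbing vertices into enlarged cycles until the counting closes---is the technical core of Meyniel's original argument, and is precisely what makes the theorem substantially harder than its undirected Ore-type analogue, where segment reversal is available and replaces the rotation step.
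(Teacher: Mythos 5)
The paper does not prove Theorem~\ref{thm:Mey} at all: it is quoted as a classical result of Meyniel and attributed to \cite{Mey}, so there is no in-paper argument to compare yours against. Judged on its own, your proposal is a strategy outline rather than a proof, and it has both a concrete error and a missing core. The error: minimality of the ear $Q=u_0x_1\dots x_su_1$ only forbids in-arcs from $V(C)$ to $x_i$ for $i\geq 2$ and out-arcs from $x_i$ to $V(C)$ for $i\leq s-1$ (an arc $u_jx_1$ produces another ear of the \emph{same} length, not a shorter one), so your claim that $x_1$ is nonadjacent in both directions to every cycle vertex other than $u_0$ is false; $x_1$ may receive arcs from many vertices of $C$, and when $s=1$ it may also send arcs to many of them. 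Consequently the set of cycle vertices to which the Meyniel condition applies with $x_1$ can be small or empty, and the ``averaging over the many admissible $u_j$'' step has no guaranteed domain.

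The missing core is exactly the step you flag yourself: the deduction that the forced degree surplus ``must create either an insertion pair or a new short ear.'' That implication is the entire content of the theorem, and nothing in your outline establishes it; the correct bounds $d_{V(C)}(x)\leq k$ and $d_D(u_j)\geq k+1$ (where they apply) do not by themselves produce a contradiction for general $k<n$, as you concede when you note that a one- or two-vertex comparison only kills $k=n-1$. The known short proof (Bondy--Thomassen) does not proceed by averaging over cycle vertices but by an insertion lemma: one shows that if a path $P$ in $H$ cannot be inserted into $C$ vertex-by-vertex, then some vertex of $P$ together with a suitable vertex of $C$ forms a nonadjacent pair whose degree sum is at most $2n-2$, violating the hypothesis directly. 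To turn your sketch into a proof you would need to either reproduce that lemma or supply the full bookkeeping you defer; as written, the argument does not close.
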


All the above criteria are sharp (see~\cite{BT}). Note also that Theorems~\ref{thm:G}, \ref{thm:NW} and \ref{thm:W} follow from Theorem~\ref{thm:Mey}.
\medskip

Naturally, for bipartite digraphs one might expect bounds for degrees of order $|D|/2$ rather than $|D|$. This is the case, indeed, for analogues of the Nash-Williams and Woodall theorems. As for the analogues of the Ghouila-Houri and Meyniel theorems, however, this expectation is quite far from reality (cf. Remark~\ref{rem:at-least-3a}). For minimal half-degrees we have the following result.

\begin{theorem}[{Amar \& Manoussakis, 1990, \cite{AM}}]
\label{thm:AM}
Let $D$ be a bipartite digraph with colour classes $X$ and $Y$ such that $|X|=|Y|=a$, where $a\geq2$.
If $\dlp(D)\geq (a+2)/2$ and $\dlm(D)\geq(a+2)/2$, then $D$ contains an oriented cycle of length $2a$.
\end{theorem}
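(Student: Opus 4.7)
The plan is to proceed by contradiction on a longest cycle of $D$.

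The first step is to show that $D$ is strongly connected. Suppose for contradiction that $V(D)=V_1\sqcup V_2$ with $V_1,V_2$ nonempty and no arc from $V_2$ to $V_1$. If one of $V_1,V_2$ is contained in a single colour class, the semi-degree hypothesis produces a vertex of in- or out-degree zero (for instance, if $V_1\subseteq X$, then every $v\in V_1$ has its in-neighbours in $Y\subseteq V_2$, which is forbidden; the three other subcases are analogous). Otherwise all four sets $V_i\cap Z$ ($i\in\{1,2\}$, $Z\in\{X,Y\}$) are nonempty, and since every $v\in V_2\cap X$ has all of its out-neighbours in $V_2\cap Y$ (and symmetrically for the remaining three sets), each of these four sets has size at least $(a+2)/2$; summing over $X$ yields $a=|V_1\cap X|+|V_2\cap X|\geq a+2$, a contradiction. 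Hence $D$ is strongly connected.

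Next let $C=x_0y_0x_1y_1\cdots x_{k-1}y_{k-1}x_0$ (with $x_i\in X$, $y_i\in Y$) be a longest cycle of $D$ and assume $k<a$. Set $W_X=X\setminus V(C)$, $W_Y=Y\setminus V(C)$, $W=W_X\cup W_Y$, so $|W_X|=|W_Y|=a-k\geq 1$. By strong connectivity there is an arc from $V(C)$ into $W$ and an arc from $W$ back to $V(C)$; choosing a path between their endpoints inside the subdigraph on $W$ yields $P=u_0u_1\cdots u_r$ together with $c^-,c^+\in V(C)$ satisfying $c^-u_0,\,u_rc^+\in A(D)$. The aim is to choose $P$, $c^-$ and $c^+$ so that replacing the $c^-$-to-$c^+$ segment of $C$ by $c^- u_0 \cdots u_r c^+$ yields a strictly longer cycle, contradicting the maximality of $C$.

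The technical heart is a counting argument. For each $u\in W$, the semi-degree bound implies that $u$ has at least $(a+2)/2-(a-k)=k-(a-2)/2$ out-neighbours, and the same number of in-neighbours, on $V(C)$. A pigeonhole argument on the cyclic positions of these neighbours, respecting bipartite parity, should then produce compatible $c^-$, $c^+$ admitting the rerouting. The \textbf{main obstacle} is parity: the new cycle must have even length, which forces $P$ to have an odd number of arcs and constrains the colour classes of $c^-$ and $c^+$; consequently one cannot absorb a single external vertex as in the Nash--Williams proof of Theorem~\ref{thm:NW}, but must instead insert a path (equivalently, a pair of external vertices of opposite colours). Making this pigeonhole tight---in particular handling the small-$k$ regime, where the bound $k-(a-2)/2$ degenerates and one must instead build a long cycle internally to the subdigraph on $W$ using the same degree hypothesis---is where the threshold $(a+2)/2$ is used sharply.
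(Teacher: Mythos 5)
The paper does not actually prove Theorem~\ref{thm:AM}: it is quoted from Amar and Manoussakis \cite{AM} as a known result, so there is no in-paper proof to compare your attempt against, and it must be judged on its own. Your first step (strong connectivity) is correct and complete: both the subcase where some $V_i$ lies inside one colour class and the count $a=|V_1\cap X|+|V_2\cap X|\geq a+2$ in the remaining subcase are fine. The rest, however, is a plan rather than a proof. The entire difficulty of the theorem is concentrated in the sentence ``a pigeonhole argument on the cyclic positions of these neighbours \ldots should then produce compatible $c^-$, $c^+$ admitting the rerouting,'' and that argument is never supplied. Replacing the $c^-$-to-$c^+$ segment of $C$ by $c^-u_0\cdots u_rc^+$ lengthens $C$ only if the deleted segment is shorter than the inserted path, i.e.\ only if $c^-$ and $c^+$ are cyclically close in the correct colour pattern; knowing merely that each $u\in W$ has at least $k-(a-2)/2$ out-neighbours and in-neighbours on $C$ does not by itself force such a configuration, and the exceptional digraph of Fig.~1 (which satisfies the bound $(a+1)/2$ but is non-hamiltonian) shows that whatever counting closes this step must use the full strength of $(a+2)/2$. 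Your sketch never identifies where that extra half enters, which is precisely the heart of the matter.

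Two secondary remarks. First, there is a small slip in the setup: strong connectivity gives an arc from $V(C)$ into $W$ and an arc from $W$ back to $V(C)$, but it does not give a path joining their endpoints \emph{inside} $D[W]$, which need not be connected; the correct move is to take a closed walk leaving and re-entering $C$ and extract from it a path all of whose internal vertices avoid $C$. Second, the ``small-$k$ regime'' you flag as problematic is in fact vacuous: taking a longest path $v_0\cdots v_m$ in $D$, all out-neighbours of $v_m$ lie on it, and closing up at the earliest such neighbour yields a cycle containing at least $(a+2)/2$ vertices of one colour class, hence of length at least $a+2$; thus any longest cycle already has $k\geq a/2+1$ and your lower bound $k-(a-2)/2\geq2$ never degenerates. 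This makes the strategy plausible, but as written the proposal still omits its essential combinatorial step, so it does not constitute a proof.
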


The above criterion is sharp. Moreover, it is shown in \cite{AM} that the only non-hamiltonian digraph $D$ satisfying $\dlp(D)\geq (a+1)/2$ and $\dlm(D)\geq(a+1)/2$ is the digraph on $6$ vertices depicted in Fig.\,1.

\begin{center}
\setlength{\unitlength}{.8cm}
\begin{picture}(7,3.5)
\put(1,1){\circle{0.2}}
\put(3,1){\circle{0.2}}
\put(5,1){\circle{0.2}}
\put(1,3){\circle*{0.2}}
\put(3,3){\circle*{0.2}}
\put(5,3){\circle*{0.2}}
\put(1,3){\vector(0,-1){1.9}}
\put(1,3){\vector(2,-1){3.9}}
\put(3,3){\vector(0,-1){1.9}}
\put(3,3){\vector(1,-1){1.9}}
\put(5,3){\vector(-2,-1){3.9}}
\put(5,3){\vector(-1,-1){1.9}}
\put(1.1,1.1){\vector(1,1){1.8}}
\put(3,2){\vector(2,1){1.9}}
\put(2.9,1.1){\vector(-1,1){1.8}}
\put(4.5,2.5){\vector(1,1){0.4}}
\put(3,2){\vector(-2,1){1.9}}
\put(3.5,2.5){\vector(-1,1){0.4}}
\put(3,0){\makebox(0,0)[b]{\ Fig.~1}}
\end{picture}
\end{center}

An analogue of Woodall's theorem was given by Manoussakis and Millis in \cite{MM}, and recently considerably strengthened by the authors.

\begin{theorem}[{Adamus \& Adamus, 2012, \cite{AA}}]
\label{thm:AA}
Let $D$ be a bipartite digraph with colour classes $X$ and $Y$ such that $|X|=|Y|=a$, where $a\geq2$.
If $d_D^+(u)+d_D^-(v)\geq a+2$ for every pair of vertices $u$ and $v$ from the opposite colour classes such that $uv\notin A(D)$, then $D$ contains an oriented cycle of length $2a$.
\end{theorem}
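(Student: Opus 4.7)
The proof proceeds by contradiction. Assume $D$ satisfies $d_D^+(u)+d_D^-(v)\ge a+2$ for every non-arc $uv$ across colour classes, yet $D$ has no oriented cycle of length $2a$. An elementary preliminary argument shows $D$ contains some cycle (the hypothesis forces positive out-degree at essentially every vertex). Take a longest cycle $C=v_0v_1\ldots v_{2k-1}v_0$ and set $R=V(D)\setminus V(C)$; by assumption $k<a$, and since $C$ alternates colours $|R\cap X|=|R\cap Y|=a-k>0$.

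The key structural observation is a bipartite parity restriction on how $C$ can be extended. A single-vertex detour $v_i\to u\to v_j$ (with $u\in R$) replacing the $C$-path from $v_i$ to $v_j$ produces a new cycle of length $((i-j)\bmod 2k)+2$, which exceeds $2k$ only when $j=i+1$ --- but then the cycle has odd length $2k+1$, impossible in a bipartite digraph. In contrast, a two-vertex detour $v_i\to u\to u'\to v_{i+1}$ with $u,u'\in R$ (forced by bipartiteness to lie in opposite colour classes) yields an even cycle of length $2k+2$. Maximality of $C$ therefore forbids, for every consecutive pair $v_i\to v_{i+1}$ on $C$, the existence of any $u,u'\in R$ with $v_iu,\,uu',\,u'v_{i+1}\in A(D)$.

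It remains to show that the Woodall-type hypothesis forces exactly such a configuration. I would fix $u\in R\cap X$ and partition $Y\cap V(C)$ according to whether $uv\in A(D)$ and whether $vu\in A(D)$, applying the hypothesis to each non-arc pair. This produces sharp lower bounds on $d_D^-(v)$ (for $v$ with $uv\notin A(D)$) and on $d_D^+(v)$ (for $v$ with $vu\notin A(D)$) across many $v\in Y\cap V(C)$. Combining these bounds with analogous bounds obtained for a suitable $u'\in R\cap Y$, and pigeonholing along the cyclic sequence of arcs of $C$, should exhibit consecutive vertices $v_i,v_{i+1}$ on $C$ admitting the forbidden insertion, contradicting the maximality of $C$.

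The main obstacle is calibrating the counting so that the modest lower bound $a+2$ suffices. One must apply the hypothesis to several non-arc pairs simultaneously, align them with the cyclic structure of $C$, and avoid overcounting across shared $C$-arcs. I anticipate that the sharpness of the constant will require a careful initial choice of $u\in R$ (for instance, one minimising $d_D^+(u)+d_D^-(u)$ among $R\cap X$) together with a dual application of the hypothesis in the form $d_D^+(v)+d_D^-(u)\ge a+2$ for pairs $(v,u)$ with $vu\notin A(D)$, so that both the in- and out-neighbourhoods of $R$-vertices on $C$ are simultaneously controlled.
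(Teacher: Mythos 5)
First, a point of orientation: the paper does not prove Theorem~\ref{thm:AA} at all --- it is quoted as a known result from~\cite{AA} --- so there is no in-paper argument to measure yours against, and your proposal must stand on its own. As written, it does not. The setup is sound: the hypothesis forces positive out-degrees and hence some cycle; a longest cycle $C$ of length $2k<2a$ is fixed; single-vertex insertions are killed by parity; and a two-vertex insertion $v_i\to u\to u'\to v_{i+1}$ with $u,u'\in R=V(D)\setminus V(C)$ would contradict maximality. But the entire content of the theorem lies in the step you defer (``should exhibit'', ``I anticipate''): you never carry out the count that converts the bound $a+2$ into the existence of such a configuration, and that count is exactly where the sharpness of the constant lives. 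A plan for a pigeonhole is not a pigeonhole.

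Second, there is a structural gap in the plan itself, independent of the missing computation. Your only mechanism for lengthening $C$ requires an arc of $D$ inside $R$ (the arc $uu'$). The hypothesis constrains only non-adjacent pairs, and the degree sums it forces can be entirely absorbed by arcs between $R$ and $V(C)$: if $u\in R\cap X$ and $w\in R\cap Y$ and $R$ spans no arcs at all, then $N^+(u),N^-(u)\subset Y\cap V(C)$ and $N^+(w),N^-(w)\subset X\cap V(C)$, so the inequalities $d_D^+(u)+d_D^-(w)\geq a+2$ and $d_D^+(w)+d_D^-(u)\geq a+2$ yield only $2k\geq a+2$, which is perfectly consistent once $k\geq(a+2)/2$. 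In that situation no pigeonholing along the arcs of $C$ can produce your forbidden insertion, because there is nothing to insert; yet the theorem still has to be proved there. A correct argument must therefore either gain control of the internal structure of $D-V(C)$ (the standard device is to take a longest path of $D-V(C)$ and count arcs from its two endpoints into $C$, which simultaneously handles the arc-free case, where that path is a single vertex) or use cycle extensions that reroute through $R$ and $V(C)$ without ever using an arc of $D[R]$. Until one of these is supplied, the proposal has a genuine gap both in its key counting step and in the adequacy of its contradiction mechanism.
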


In the present paper, we give bipartite analogues of the Ghouila-Houri and Meyniel theorems. These are Theorems~\ref{thm:main-corollary} (below) and~\ref{thm:main}, respectively.
Quite surprisingly, the bounds on degrees are much bigger than one might expect from Theorems~\ref{thm:AM} and~\ref{thm:AA} above.

\begin{theorem}
\label{thm:main-corollary}
Let $D$ be a balanced bipartite digraph with colour classes of cardinalities $a$, where $a\geq2$.
If $\dl(D)\geq(3a+1)/2$, then $D$ contains an oriented cycle of length $2a$.
\end{theorem}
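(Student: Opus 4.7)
The plan is to deduce this statement as an immediate corollary of Theorem~\ref{thm:main}. The strategy: verify that the uniform minimum total-degree bound $\delta(D)\geq(3a+1)/2$ automatically forces condition $\M$ for every pair of non-adjacent vertices, and then invoke Theorem~\ref{thm:main} directly, with no further combinatorial work needed.

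Concretely, I would fix an arbitrary pair of distinct vertices $u,v\in V(D)$ satisfying $uv\notin A(D)$ and $vu\notin A(D)$. Since every vertex of $D$ has total degree at least $(3a+1)/2$, the chain
\[
d_D(u)+d_D(v)\;\geq\;2\delta(D)\;\geq\;2\cdot\frac{3a+1}{2}\;=\;3a+1
\]
is immediate. This is precisely the inequality required by Definition~\ref{def:3a+1}, so $D$ satisfies condition $\M$, and Theorem~\ref{thm:main} furnishes an oriented cycle of length $2a$ in $D$.

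There is essentially no obstacle in this argument: all the combinatorial depth sits in Theorem~\ref{thm:main}, and the corollary is simply its vertex-uniform reformulation, exactly parallel to how Dirac's condition follows from Ore's in the undirected setting. The only minor convention to record is that $(3a+1)/2$ need not be an integer, so $\delta(D)\geq(3a+1)/2$ should be read as $\delta(D)\geq\lceil(3a+1)/2\rceil$; this has no bearing on the estimate above.
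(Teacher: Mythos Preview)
Your proposal is correct and matches the paper's treatment exactly: the paper states that Theorem~\ref{thm:main-corollary} ``is an immediate corollary of Theorem~\ref{thm:main}'' and gives no further argument. Your verification that $\dl(D)\geq(3a+1)/2$ forces condition~$\M$ via $d_D(u)+d_D(v)\geq 2\dl(D)\geq 3a+1$ is precisely the intended one-line deduction.
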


Of course, Theorem~\ref{thm:main-corollary} is an immediate corollary of Theorem~\ref{thm:main}. The bounds in Theorems~\ref{thm:main} and~\ref{thm:main-corollary} are sharp, as can be seen in the following example.

\begin{example}
\label{ex:1}
Let $a$ be a positive even integer, and let $D'$ be a bipartite digraph with colour classes $X$ and $Y$ such that $X$ (resp. $Y$) is a disjoint union of sets $R,S$ (resp. $U,W$) of cardinality $a/2$ each, and $A(D')$ consists of the following arcs:\\
(a) $ry$, for all $r\in R$ and $y\in Y$,\\
(b) $ux$, for all $u\in U$ and $x\in X$, and\\
(c) $sw$ and $ws$, for all $s\in S$ and $w\in W$.\\
Then every vertex of $D'$ is of degree $3a/2$, hence $\dl(D')=3a/2$, but $D'$ contains no hamiltonian cycle.
\end{example}

\begin{remark}
\label{rem:at-least-3a}
Notice that the above $D'$ is not strongly connected. On the other hand, Amar and Manoussakis~\cite{AM} construct a family of digraphs $D(a,k)$ of order $2a$ which are strongly connected, non-hamiltonian and satisfy $\dl(D(a,k))=a+k$, for any $1\leq k<a/2$ (Example~\ref{ex:2}, below). Therefore, even under the strong-connectedness assumption, one cannot get a better bound on $\dl(D)$ in Theorem~\ref{thm:main-corollary} than $3a/2$ (nor a better bound in Theorem~\ref{thm:main} than $3a$).

At the same time, under the assumptions of Theorems~\ref{thm:main} and~\ref{thm:main-corollary}, the strong-connectedness is redundant. In fact, condition $\M$ of Theorem~\ref{thm:main} implies a much stronger property: a bipartite digraph $D$ satisfying condition $\M$ contains a complete matching $M$, and, for every pair of distinct vertices $u,v$, $D$ contains an oriented path from $u$ to $v$ which is compatible with $M$ (cf. Lemma~\ref{lem:3}). 
\end{remark}

\begin{example}
\label{ex:2}
For $a\geq 2$ and $1\leq k<a/2$, let $D(a,k)$ be a bipartite digraph with colour classes $X$ and $Y$ such that $X$ (resp. $Y$) is a disjoint union of sets $R,S$ (resp. $U,W$) with $|R|=|U|=k$, $|S|=|W|=a-k$, and $A(D(a,k))$ consists of the following arcs:\\
(a) $ry$ and $yr$, for all $r\in R$ and $y\in Y$,\\
(b) $ux$ and $xu$, for all $u\in U$ and $x\in X$, and\\
(c) $sw$, for all $s\in S$ and $w\in W$.
\end{example}

Finally, notice that condition $\M$ cannot be weakened to apply only to pairs of vertices from the opposite colour classes ({\`a} la Theorem~\ref{thm:AA}). This follows from the fact that there exist strongly connected non-hamiltonian bipartite tournaments (Example~\ref{ex:3} below). Recall that a \emph{bipartite tournament} is a bipartite digraph $D$ in which, for every pair of vertices $x,y$ from the opposite colour classes, precisely one of the arcs $xy$, $yx$ belongs to $A(D)$.

\begin{example}
\label{ex:3}
For $a\geq 2$ and $1\leq k<a/2$, let $T(a,k)$ be a bipartite digraph with colour classes $X$ and $Y$ such that $X$ (resp. $Y$) is a disjoint union of sets $R,S$ (resp. $U,W$) with $|R|=|U|=k$, $|S|=|W|=a-k$, and $A(T(a,k))$ consists of the following arcs:\\
(a) $ru$, for all $r\in R$ and $u\in U$,\\
(b) $us$, for all $u\in U$ and $s\in S$,\\
(c) $sw$, for all $s\in S$ and $w\in W$, and\\
(d) $wr$, for all $w\in W$ and $r\in R$.\\
Then $T(a,k)$ is strongly connected and vacuously satisfies condition $\M$ for every pair of vertices from the opposite colour classes, but $T(a,k)$ contains no hamiltonian cycle.
\end{example}

\subsection{Notation and terminology}
\label{subsec:not}

A \emph{digraph} $D$ is a pair $(V(D),A(D))$, where $V(D)$ is a finite set (of \emph{vertices}) and $A(D)$ is a set of ordered pairs of distinct elements of $V(D)$, called \emph{arcs} (i.e., $D$ has no loops or multiple arcs). For vertices $u$ and $v$ from $V(D)$, we write $uv\in A(D)$ to say that $A(D)$ contains the ordered pair $(u,v)$. The number of vertices $|V(D)|$ is the \emph{order} of $D$ (also denoted by $|D|$). The \emph{size} of $D$, denoted $\|D\|$, is defined as $|A(D)|$.

For vertex sets $S,T\subset V(D)$, we denote by $N_S^+(T)$ the set of vertices in $S$ \emph{dominated} by the vertices of $T$; i.e.,
\[
N_S^+(T)=\{u\in S: vu\in A(D)\text{\ for\ some\ }v\in T\}\,.
\]
Similarly, $N_S^-(T)$ denotes the set of vertices of $S$ \emph{dominating} the vertices of $T$; i.e,
\[
N_S^-(T)=\{u\in S: uv\in A(D)\text{\ for\ some\ }v\in T\}\,.
\]
If $T=\{v\}$ is a single vertex, the cardinality of $N_S^+(v)$ (resp. $N_S^-(v)$), denoted by $d_S^+(v)$ (resp. $d_S^-(v)$) is called the
\emph{outdegree} (resp. \emph{indegree}) of $v$ relative to $S$. The \emph{degree} of $v$ (relative to $S$) is $d_S(v)=d_S^+(v)+d_S^-(v)$.
To simplify notation, for a sub-digraph $D'$ of $D$, we will often write $d^+_{D'}(u)$ (resp. $d^-_{D'}(u)$, or $d_{D'}(u)$) instead of $d^+_{V(D')}(u)$ (resp. $d^-_{V(D')}(u)$, or $d_{V(D')}(u)$). Also, we will write $N^+(T)$ (resp. $N^-(T)$) for $N_{V(D)}^+(T)$ (resp. $N_{V(D)}^-(T)$). Further, by $\dlp(D)$ and $\dlm(D)$ we will denote respectively the least outdegree and the least indegree of $D$; i.e., $\dlp(D)=\min\{d_D^+(v):v\in V(D)\}$ and $\dlm(D)=\min\{d_D^-(v):v\in V(D)\}$. The minimal degree of $D$ will be denoted by $\dl(D)$.

A digraph \emph{induced} in $D$ by a vertex subset $S\subset V(D)$ is denoted by $D[S]$, and $D-S$ denotes a digraph obtained from $D$ by removing the vertices of $S$ and their incident arcs (that is, $D-S=D[V(D)\setminus S]$).

An oriented cycle (resp. oriented path) on vertices $v_1,\dots,v_m$ in $D$ is denoted by $[v_1,\ldots,v_m]$ (resp. $(v_1,\ldots,v_m)$). We will refer to them as simply \emph{cycles} and \emph{paths} (skipping the term ``oriented''), since their non-oriented counterparts are not considered in this article at all.

A cycle passing through all the vertices of $D$ is called \emph{hamiltonian}. A digraph containing a hamiltonian cycle is called a \emph{hamiltonian digraph}.

A digraph $D$ is \emph{bipartite} when $V(D)$ is a disjoint union of sets $X$ and $Y$ (the \emph{colour classes}) such that $A(D)\cap(X\times X)=\varnothing$ and $A(D)\cap (Y\times Y)=\varnothing$. It is called \emph{balanced} if $|X|=|Y|$. A \emph{matching from $X$ to $Y$} is an independent set of arcs with origin in $X$ and terminus in $Y$. If $D$ is balanced, one says that such a matching is \emph{complete} if it consists of precisely $|X|$ arcs. A path or cycle is said to be \emph{compatible} with a matching $M$ from $X$ to $Y$ (or, \emph{$M$-compatible}, for short) if its arcs are alternately in $M$ and in $A(D)\setminus M$.

For a complete matching $M$ from $X$ to $Y$ and a vertex $x'\in X$, we will denote by $M(x')$ the unique vertex $y'\in Y$ such that $x'y'\in M$. Similarly, for $y'\in Y$, we will denote by $M^{-1}(y')$ the unique vertex $x'\in X$ for which $x'y'\in M$. Finally, for a subset $S\subset Y$, we will denote by $M^{-1}(S)$ the set $\{M^{-1}(y):y\in S\}$.

\subsection{Plan of the proof} We prove Theorem~\ref{thm:main} in Section~\ref{sec:main-proof}, after establishing its technical components in a series of lemmas below. We proceed by contradiction. Despite its discouraging length, the main idea of the proof is fairly simple: First, we show that, under condition $\M$, our bipartite digraph $D$ splits into a sequence of cycles $C_1,\dots,C_k$ with pairwise disjoint vertex sets, such that each consecutive cycle contains at least half the vertices remaining after removing its predecessing cycles, and is of maximal possible length. The key component here is our observation that condition $\M$ is essentially hereditary in this decomposition (cf. Lemma~\ref{lem:5}). More precisely, if $D$ satisfies condition $\M$, then, for every $j\in\{1,\dots,k-1\}$, $D-(V(C_1)\cup\dots\cup V(C_j))$ satisfies the so-called condition $\A$ (see Def.~\ref{def:6a'+2}), which is but condition $\M$ applied to $4$-tuples rather than pairs of vertices. This observation allows us to work recursively in the digraphs $D-(V(C_1)\cup\dots\cup V(C_j))$, $j\geq1$. Next, we show that, for some $j$, $D$ contains an oriented path $P$ which originates and terminates in $C_j$ and passes through all the cycles ``to the right'' of $C_j$ (i.e., $C_{j+1},\dots$). Finally, we prove that $P$ is, in fact, so long that glueing it into $C_j$ produces a cycle strictly longer than $C_j$, which contradicts its maximality.

\medskip

\section{Lemmata}
\label{sec:lemmata}

\begin{lemma}
\label{lem:1}
Let $D$ be a balanced bipartite digraph with colour classes of cardinalities $a$, where $a\geq2$.
If $D$ satisfies condition $\M$, then for every set of vertices $S$ contained in one of the colour classes of $D$ and with cardinality $|S|\leq(a+1)/2$, we have $|N^+(S)|\geq|S|$.
\end{lemma}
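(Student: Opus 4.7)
The plan is to argue by contradiction: suppose $|N^+(S)| = t < s = |S|$, where $S$ lies in a colour class, say $X$. Two basic degree estimates will be used throughout: for every $v\in S$, one has $d^+(v)\le t$ (since $N^+(v)\subseteq N^+(S)$) and $d^-(v)\le a$ (since $D$ is bipartite, so $N^-(v)\subseteq Y$). The structural observation that makes condition $\M$ bite is that $S\subseteq X$, so any two distinct vertices of $S$ are automatically non-adjacent and hence the hypothesis $d_D(u)+d_D(v)\ge 3a+1$ applies to every pair within $S$.

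For the principal case $s\ge 2$, I would apply $\M$ to all $\binom{s}{2}$ pairs in $S$ and sum (each vertex appearing in exactly $s-1$ pairs):
\[
(s-1)\sum_{v\in S} d_D(v) \;=\; \sum_{\{u,v\}\subseteq S}\bigl(d_D(u)+d_D(v)\bigr) \;\ge\; \binom{s}{2}(3a+1),
\]
which simplifies to $\sum_{v\in S} d_D(v)\ge s(3a+1)/2$. The degree bounds above yield the matching upper estimate $\sum_{v\in S} d_D(v)\le s(t+a)$. Comparing the two gives $t+a\ge (3a+1)/2$, i.e.\ $t\ge (a+1)/2\ge s$ by the hypothesis on $|S|$, contradicting $t<s$.

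The boundary case $s=1$ is where the averaging argument collapses and extra input is needed. Here $S=\{v\}$ and $t=0$ forces $d^+(v)=0$, so $d_D(v)=d^-(v)\le a$. Using the assumption $a\ge 2$, one can pick $v'\in X\setminus\{v\}$; then $v,v'$ lie in the same colour class, so they are non-adjacent and condition $\M$ gives $d_D(v)+d_D(v')\ge 3a+1$. But trivially $d_D(v')\le 2a$, whence $d_D(v)+d_D(v')\le a+2a=3a$, a contradiction. This $s=1$ case is the main (though minor) obstacle: it cannot be dispatched by the internal averaging and requires stepping outside $S$ and explicitly invoking $a\ge 2$ to produce an auxiliary non-adjacent pair.
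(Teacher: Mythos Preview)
Your proof is correct and follows essentially the same approach as the paper's: both exploit that any two vertices of $S$ lie in a common colour class (hence are non-adjacent, so condition $\M$ applies) together with the bounds $d^+(v)\le |N^+(S)|$ and $d^-(v)\le a$, and both handle the case $|S|=1$ separately by showing $d^+(v)>0$ via an auxiliary vertex in the same colour class. The only cosmetic difference is that you average over all $\binom{s}{2}$ pairs, whereas the paper simply picks one pair $v_1,v_2\in S$ and derives the contradiction $d_D(v_1)+d_D(v_2)\le 2a+2|N^+(S)|<2a+2|S|\le 3a+1$ directly; your averaging is harmless but unnecessary, since a single pair already yields $2(t+a)\ge 3a+1$.
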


\begin{proof}
First observe that condition $\M$ implies $d_D^+(v)>0$ for every $v\in V(D)$. For if $d_D^+(v)=0$, then, for any $v'\in V(D)$ from the same colour class, one has $d_D(v)+d_D(v')=d_D^+(v)+d_D^-(v)+d_D(v')\leq0+a+2a<3a+1$, which contradicts condition $\M$.

Let then $S$ be a set of vertices of $D$ contained in one of the colour classes and such that $|S|\leq(a+1)/2$. If $|S|\leq1$, then, by the above observation, there is nothing to show. One can thus assume that $S$ contains two distinct vertices, say $v_1$ and $v_2$. Suppose that $|N^+(S)|<|S|$. Then
\begin{multline}
\notag
d_D(v_1)+d_D(v_2)=(d_D^-(v_1)+d_D^-(v_2))+(d_D^+(v_1)+d_D^+(v_2))\leq 2a+2|N^+(S)|\\
<2a+2|S|\leq2a+(a+1)=3a+1\,,
\end{multline}
which contradicts condition $\M$ again.
\end{proof}

\begin{lemma}
\label{lem:2}
Let $D$ be a balanced bipartite digraph with colour classes $X$ and $Y$ of cardinalities $a$, where $a\geq2$.
If $D$ satisfies condition $\M$, then $D$ contains a complete matching from $X$ to $Y$ or a complete matching from $Y$ to $X$.
\end{lemma}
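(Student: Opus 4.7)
The plan is to establish the stronger statement that a complete matching from $X$ to $Y$ always exists under condition $\M$, from which the lemma follows at once. I proceed by contradiction via Hall's marriage theorem: if no such matching exists then Hall's theorem produces a set $S \subseteq X$ with $|N^+(S)| < |S|$, and Lemma~\ref{lem:1} forces $|S| > (a+1)/2$.

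Let $T' = Y \setminus N^+(S)$. By the definition of $N^+$, no vertex of $S$ dominates any $w \in T'$, so every $w \in T'$ satisfies $d_D^-(w) \leq a - |S|$; combined with the trivial $d_D^+(w) \leq a$, this gives $d_D(w) \leq 2a - |S|$. The main case $|T'| \geq 2$ covers all $|S| \leq a-1$, since $|T'| = a - |N^+(S)| \geq a - (|S|-1) \geq 2$ in that range. Picking two distinct vertices $w, w' \in T'$, both lie in $Y$ and are non-adjacent in $D$ by bipartiteness, so condition $\M$ yields $d_D(w) + d_D(w') \geq 3a+1$, while the degree bound above gives $d_D(w) + d_D(w') \leq 4a - 2|S|$. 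Since $|S| > (a+1)/2$, the right side is strictly smaller than $3a+1$, a contradiction.

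The remaining edge case $|T'| = 1$ forces $|N^+(S)| = a-1$ and hence $|S| = a$; then the unique vertex $w_0 \in T'$ has $d_D^-(w_0) = 0$, and so $d_D(w_0) \leq a$. Pairing $w_0$ with any other $w' \in Y$ (again non-adjacent, by bipartiteness) and invoking $\M$ yields $d_D(w_0) + d_D(w') \geq 3a+1$, which contradicts the trivial bound $d_D(w_0) + d_D(w') \leq a + 2a = 3a$.

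The main obstacle I anticipate is precisely this boundary case: the clean two-vertex argument on $T'$ collapses when $|T'| = 1$, but the extra structural information $d_D^-(w_0) = 0$ that is forced in that situation is more than enough to exploit $\M$ against. No other subtleties arise, and the whole argument should fit within a few lines around the two case splits above.
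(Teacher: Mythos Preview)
Your argument is correct, and in fact it establishes more than the paper does: you show that condition~$\M$ guarantees a complete matching from $X$ to $Y$ outright (and, by the evident symmetry of $\M$ under reversing all arcs, also from $Y$ to $X$), whereas the paper only proves the disjunction. The paper's proof assumes that \emph{both} directions fail, obtains minimal Hall-violating sets $S_0\subset X$ and $T_0\subset Y$, and then analyses $S_0\setminus N^+(T_0)$, exploiting the interplay between the two obstructions. Your route is more direct: from a single violating set $S\subset X$ you pass to the ``deficient'' set $T'=Y\setminus N^+(S)$, bound $d_D(w)\leq 2a-|S|$ for $w\in T'$, and apply $\M$ to a pair in $T'$ (with a clean boundary case when $|T'|=1$, forcing $S=X$ and $d_D^-(w_0)=0$). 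This is shorter, avoids the minimality bookkeeping, and yields the stronger conclusion; the paper's approach, by contrast, is tailored to the weaker ``or'' statement and does not obviously adapt to give both matchings.
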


\begin{proof}
For a proof by contradiction, suppose that $D$ contains no complete matching from $X$ to $Y$ nor from $Y$ to $X$.
Then, by Hall's theorem (see, e.g., \cite{B}), there exist sets $S\subset X$ and $T\subset Y$ such that $|N^+(S)|<|S|$ and $|N^+(T)|<|T|$. Define
\begin{align}
\notag
s_X\ &=\ \min\{j\in\N:\text{\ there\ is\ }S\subset X\text{\ such\ that\ }|S|=j,\ |N^+(S)|<|S|\}\quad\text{and}\\
\notag
s_Y\ &=\ \min\{j\in\N:\text{\ there\ is\ }T\subset X\text{\ such\ that\ }|T|=j,\ |N^+(T)|<|T|\}\,.
\end{align}
Without loss of generality, we can assume that $s_Y\leq s_X$. By Lemma~\ref{lem:1}, both $s_X$ and $s_Y$ are strictly greater than $(a+1)/2$.

Pick subsets $S_0\subset X$ and $T_0\subset Y$ such that $|S_0|=s_X$, $|T_0|=s_Y$, $|N^+(S_0)|<|S_0|$, and $|N^+(T_0)|<|T_0|$.
We have $S_0\cap(X\setminus N^+(T_0))\neq\varnothing$, for else $S_0\subset N^+(T_0)$, hence $s_X=|S_0|\leq|N^+(T_0)|<|T_0|=s_Y$, contrary to our assumption.

Now, for every $x\in S_0\setminus N^+(T_0)$, we have $d_D^+(x)\leq|N^+(S_0)|<s_X$ and $d_D^-(x)\leq a-|T_0|=a-s_Y$, hence
\begin{equation}
\label{eq:1elt}
d_D(x)<a+s_X-s_Y\,.
\end{equation}
Therefore, if $S_0\setminus N^+(T_0)$ contains at least two elements, $x_1$ and $x_2$, say, then
\begin{multline}
\notag
d_D(x_1)+d_D(x_2)<2(a+s_X-s_Y)=2a+2s_X-2s_Y\\
<2a+2s_X-(a+1)=2s_X+a-1\,.
\end{multline}
On the other hand, by condition $\M$, $d_D(x_1)+d_D(x_2)\geq3a+1$. It follows that $s_X>a+1$, which is absurd.

It thus remains to consider the case that $|S_0\setminus N^+(T_0)|=1$. One then has $|S_0|=|S_0\cap N^+(T_0)|+1$, hence $s_X=|S_0|\leq|N^+(T_0)|+1\leq(s_Y-1)+1=s_Y$, and so $s_X=s_Y$. Let $x_0$ denote the sole vertex of $S_0\setminus N^+(T_0)$. By \eqref{eq:1elt}, we now have $d_D(x_0)<a$. Therefore, for any vertex $x\in X\setminus\{x_0\}$,
\[
d_D(x_0)+d_D(x)<a+2a<3a+1\,,
\]
which contradicts condition $\M$.
\end{proof}

\begin{lemma}
\label{lem:3}
Let $D$ be a balanced bipartite digraph with colour classes $X$ and $Y$ of cardinalities $a$, where $a\geq2$, which satisfies condition $\M$. Suppose that $D$ contains a complete matching $M$ from $X$ to $Y$, and $D$ contains no oriented cycle of lenght $2a$. Then, for every pair of distinct vertices $u,v\in V(D)$, $D$ contains an $M$-compatible path from $u$ to $v$.
\end{lemma}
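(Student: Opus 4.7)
My plan is a proof by contradiction: assume distinct vertices $u,v\in V(D)$ admit no $M$-compatible path from $u$ to $v$, and derive a hamiltonian cycle in $D$, contradicting the hypothesis. The key tool will be the contracted auxiliary digraph $D_M$ with vertex set $\{\nu_e:e\in M\}$, where $\nu_{xy}\nu_{x'y'}\in A(D_M)$ iff $e\neq e'$ and $yx'\in A(D)$. A directed path $\nu_{x_1y_1}\to\cdots\to\nu_{x_ky_k}$ in $D_M$ translates precisely to the $M$-compatible path $x_1\to y_1\to x_2\to\cdots\to x_k\to y_k$ in $D$, so the existence of $M$-compatible paths becomes largely a reachability question in $D_M$.

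The first step is to show that $D_M$ is strongly connected. Assume otherwise: let $V(D_M)=A\sqcup B$ with $A,B\neq\varnothing$ and no $D_M$-arcs from $B$ to $A$, and write $X_A,Y_A$ (resp.\ $X_B,Y_B$) for the $X$- and $Y$-endpoints of the matching edges in $A$ (resp.\ $B$). The absence of $B\to A$ arcs translates in $D$ to $yx'\notin A(D)$ for all $y\in Y_B$ and $x'\in X_A$, yielding $d^-(x')\leq|A|$ and $d^+(y)\leq|B|$, hence $d(x')\leq a+|A|$ and $d(y)\leq a+|B|$. When $|A|,|B|\geq 2$, condition $\M$ applied to two distinct (hence non-adjacent) vertices inside $X_A$ forces $|A|\geq(a+1)/2$ and, symmetrically inside $Y_B$, $|B|\geq(a+1)/2$, contradicting $|A|+|B|=a$. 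In the extremal cases $|A|=1$ or $|B|=1$, iterated applications of condition $\M$ pin down the structure of $D$ sharply: the non-exceptional vertices are forced to have close-to-maximal degree, which allows one to write down an explicit hamiltonian cycle in $D$ of the form $x_0\to y_1\to x_1\to\cdots\to y_{a-1}\to x_{a-1}\to y_0\to x_0$ (where $x_0y_0$ is the exceptional matching edge), again contradicting the hypothesis.

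With $D_M$ strongly connected, I will construct the desired $M$-compatible path case-by-case. The aligned case $u\in X$, $v\in Y$ is easiest: if $v=M(u)$, take the matching arc, otherwise a $D_M$-path $\nu_{uM(u)}\to\cdots\to\nu_{M^{-1}(v)v}$ translates directly. The same-colour cases $u,v\in X$ or $u,v\in Y$, and the swapped case $u\in Y$, $v\in X$, reduce to the aligned case by prepending or appending a single non-$M$ arc; this requires a $D_M$-path from $\nu_{uM(u)}$ to some in-neighbour of $v$ (in $D$) that avoids the vertex $\nu_{vM(v)}$. To secure it, I would first use condition $\M$ to show $N^-(v)\not\subseteq\{M(v)\}$ (otherwise the degree constraints would again produce a hamiltonian cycle in $D$), and then re-run the Step~1 analysis on $D_M-\{\nu_{vM(v)}\}$ to obtain the required reachability.

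\textbf{Main obstacle.} The hardest step will be this same-colour case: strong connectivity of $D_M$ alone does not deliver paths that avoid a prescribed vertex, so a second, delicate application of the Step~1 technique---combining condition $\M$ with the no-$2a$-cycle hypothesis to rule out residual extremal configurations---is needed.
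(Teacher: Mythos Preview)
Your Step~1 is essentially the paper's argument in different packaging. The paper opens its proof by observing that it suffices to produce an $M$-compatible path from every $y\in Y$ to every $x\in X$ (the other three cases follow by prepending or appending the matching arcs $M^{-1}(y)\,y$ and $x\,M(x)$), and this statement is exactly the strong connectivity of your $D_M$. The paper then fixes $y'\in Y$, $x''\in X$ with no such path, takes $S$ to be the set of $Y$-vertices other than $y'$ reachable from $y'$, and applies condition $\M$ twice: once to a pair $y',y$ with $y\in S$ (forcing $|S|\geq (a-1)/2$) and once to the pair $x'',x'''$ where $x'''=M^{-1}(y''')$ for a second in-neighbour $y'''$ of $x''$ (forcing $|S|\leq (a-2)/2$). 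The existence of this $y'''$ uses $d^-(x'')\geq 2$, which the paper extracts from non-hamiltonicity in a preliminary remark---precisely your $|A|=1$ extremal analysis. So your partition argument and the paper's reachable-set argument are two phrasings of the same idea.

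Your ``Main obstacle'', however, is a phantom. Once $D_M$ is strongly connected, all four cases follow immediately and no vertex-avoidance is needed: for $u,v\in X$ take a $D_M$-path from $\nu_{uM(u)}$ to $\nu_{vM(v)}$ and simply drop the terminal vertex $M(v)$; for $u,v\in Y$ drop the initial vertex; for $u\in Y$, $v\in X$ drop both (using a $D_M$-cycle through $\nu_{vu}$ when $v=M^{-1}(u)$, which exists since $a\geq 2$). This is exactly the paper's one-paragraph reduction. Your proposed workaround---re-running the Step~1 argument on $D_M-\{\nu_{vM(v)}\}$---is not only unnecessary but does not go through as stated: after deleting one vertex the degree bounds become $d^-(x')\leq |A|+1$ and $d^+(y)\leq |B|+1$, and condition $\M$ then yields only $|A|,|B|\geq (a-1)/2$, which is compatible with $|A|+|B|=a-1$ and gives no contradiction. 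So drop that step and simply truncate.
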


\begin{remark}
\label{rem:non-hamiltonian}
Under the hypotheses of Lemma~\ref{lem:3}, notice that $d_D^+(v)\geq2$ and $d_D^-(v)\geq2$ for all $v\in V(D)$. Indeed, for if $d_D^+(v')<2$ for some $v'\in V(D)$, then $d_D(v')\leq a+1$, hence, by condition $\M$, $d_D(v)\geq2a$ for all $v\neq v'$ from the same colour class. Since every degree is bounded above by $2a$, we would actually have $d_D(v)=2a$ for all $v\neq v'$ from the colour class of $v'$, as well as $d_D^+(v')=1$ and $d_D^-(v')=a$. It is readily seen that then $D$ would contain a hamiltonian cycle. The argument for $d_D^-(v)$ is analogous.
\end{remark}

\subsubsection*{Proof of Lemma~\ref{lem:3}}
First, we claim that it suffices to show that $D$ contains an $M$-compatible path from $y$ to $x$ for every pair of vertices such that $y\in Y$ and $x\in X$. Indeed, to find an $M$-compatible path in $D$ from $x'\in X$ to $x''\in X$, it suffices to find an $M$-compatible path from $M(x')$ to $x''$. Likewise, to find an $M$-compatible path from $y'\in Y$ to $y''\in Y$, it suffices to find an $M$-compatible path from $y'$ to $M^{-1}(y'')$. Finally, to find an $M$-compatible path from $x'\in X$ to $y''\in Y$, it suffices to find an $M$-compatible path from $M(x')$ to $M^{-1}(y'')$ (unless $x'y''$ already is in $M$).\smallskip

For a proof by contradiction, suppose that $y'\in Y$ and $x''\in X$ are such that $D$ contains no path from $y'$ to $x''$ compatible with $M$. By Remark~\ref{rem:non-hamiltonian}, we have $d_D^+(y')\geq2$ and $d_D^-(x'')\geq2$.
Denote by $S$ the set of those vertices in $Y\setminus\{y'\}$ to which one can get from $y'$ along an $M$-compatible path of positive length. We have $|S|\geq1$, since $d_D^+(y')\geq2$. Moreover, by hypothesis, $x''\in X\setminus N^+(S)$, and so
\begin{equation}
\label{eq:x''}
d_D^-(x'')\leq a-|S|\,.
\end{equation}
Let $x'$ denote $M^{-1}(y')$ and let $y''$ denote $M(x'')$. (It may be that $x'=x''$ and $y'=y''$.)

Choose a vertex $y'''\in Y\setminus\{y''\}$ such that $y'''x''\in A(D)$. Such a vertex exists, since $d_D^-(x'')\geq2$. Note that $y'''\neq y'$, for otherwise $D$ would contain an $M$-compatible path from $y'$ to $x''$ (namely, the arc $y'x''$ itself). For the same reason, the vertex $x''':=M^{-1}(y''')$ belongs to $X\setminus N^+(S)$ and is not dominated by $y'$. Consequently,
\begin{equation}
\label{eq:x'''}
d_D^-(x''')\leq a-(|S|+1)\,.
\end{equation}
Now, condition $\M$ together with \eqref{eq:x''} and \eqref{eq:x'''} imply that
\begin{multline}
\notag
3a+1\leq d_D(x'')+d_D(x''')=(d_D^+(x'')+d_D^+(x'''))+(d_D^-(x'')+d_D^-(x'''))\\
\leq 2a+(a-|S|)+(a-|S|-1)\,,
\end{multline}
hence $|S|\leq(a-2)/2$.

On the other hand, by definition of $S$, $N^+(y')\subset\{x'\}\cup M^{-1}(S)$, and $N^+(y)\subset\{x'\}\cup M^{-1}(S)$ for all $y\in S$. Hence, for any $y\in S$, we have
\begin{multline}
\notag
3a+1\leq d_D(y')+d_D(y)=(d_D^-(y')+d_D^-(y))+(d_D^+(y')+d_D^+(y))\\
\leq 2a+2(|M^{-1}(S)|+1)=2a+2|S|+2\,,
\end{multline}
and so $|S|\geq(a-1)/2$; a contradiction.
\qed
\medskip

\begin{definition}
\label{def:6a'+2}
Consider a balanced bipartite digraph $D'$ with colour classes $X'$ and $Y'$ of cardinalities $a'$, where $a'\geq2$. Suppose that $D'$ contains a complete matching $M'$ from $X'$ to $Y'$. We will say that $D'$ satisfies \emph{condition $\A$} when
\[
d_{D'}(x')+d_{D'}(y')+d_{D'}(x'')+d_{D'}(y'')\geq 6a'+2
\]
for all pairwise distinct $x',x''\in X'$ and $y',y''\in Y'$ such that $D'$ contains $M'$-compatible paths from $x'$ to $y'$ and from $x''$ to $y''$.
\end{definition}

Notice that condition $\A$ follows from, but is strictly weaker than condition $\M$.

\begin{lemma}
\label{lem:4}
Let $D'$ be a balanced bipartite digraph with colour classes $X'$ and $Y'$ of cardinalities $a'$, where $a'\geq2$, and let $M'$ be a complete matching from $X'$ to $Y'$ in $D'$. Suppose that $D'$ satisfies condition $\A$.
If $a'\geq3$, then $D'$ contains an $M'$-compatible cycle of length at least $a'$. If $a'=2$, then $D'$ contains a cycle of length $4$ compatible with some matching from $X'$ to $Y'$.
\end{lemma}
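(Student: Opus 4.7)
\textbf{Plan for Lemma~\ref{lem:4}.}  I would treat the small case $a'=2$ by a direct arc-count, and the general case $a'\geq 3$ by a contradiction-and-extension argument on a longest $M'$-compatible cycle.

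\textbf{Case $a'=2$.} The two matching arcs $x_1y_1$ and $x_2y_2$ are trivial $M'$-compatible paths with pairwise distinct endpoints, so condition $\A$ immediately applies and yields $d(x_1)+d(y_1)+d(x_2)+d(y_2)\geq 6\cdot 2+2=14$. Since this degree sum equals $2\|D'\|$, at most one of the eight possible arcs between $X'$ and $Y'$ (in either direction) is absent. The two candidate $4$-cycles $[x_1,y_1,x_2,y_2]$ and $[x_1,y_2,x_2,y_1]$ use disjoint sets of four arcs each, and are compatible, respectively, with the matching $M'=\{x_1y_1,x_2y_2\}$ and with the opposite matching $\{x_1y_2,x_2y_1\}$. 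The single missing arc destroys at most one of them, leaving the required $4$-cycle compatible with some matching from $X'$ to $Y'$.

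\textbf{Case $a'\geq 3$.} Argue by contradiction: assume the longest $M'$-compatible cycle $C$ in $D'$ has length $2c<a'$ (with $c=0$ if no $M'$-compatible cycle exists). Set $R=V(D')\setminus V(C)$, so that $|R\cap X'|=|R\cap Y'|=a'-c>a'/2$ and all the $M'$-arcs $xM'(x)$ with $x\in R\cap X'$ lie inside $R$. The core of the proof is to show, using condition $\A$, that some arc between $R$ and $V(C)$ (or inside $R$) enables an \emph{extension-rotation} of $C$ into a strictly longer $M'$-compatible cycle, contradicting the maximality of $C$.

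Concretely, for any two distinct $M'$-arcs $x'M'(x')$ and $x''M'(x'')$, condition $\A$ applies to these trivial $M'$-compatible paths and gives
\[
d(x')+d(M'(x'))+d(x'')+d(M'(x''))\geq 6a'+2\,.
\]
Summing such inequalities over all pairs of $M'$-arcs within $R$ yields a lower bound on the total degree of $R$-vertices in $D'$ that, because $|R|>a'$, forces many arcs to leave $R$ into $V(C)$ in the "forward" and "backward" directions required for $M'$-compatibility. I would then pair up such an arc $y x$ with $y\in R\cap Y'$, $x\in V(C)\cap X'$, and an arc $y' x'$ with $y'\in V(C)\cap Y'$, $x'\in R\cap X'$, in such a way that splicing them into $C$ using the $M'$-arcs at $x,x',M^{-1}(y),M^{-1}(y')$ produces a longer $M'$-compatible cycle.

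The main obstacle is the combinatorial case analysis needed to guarantee the existence of such a compatible splicing. Condition $\A$ is only applicable when the required pair of $M'$-compatible paths between the chosen endpoints is known to exist, so the argument has to construct these paths explicitly --- using the structure of $C$ and the $M'$-arcs in $R$ --- before the degree inequality can be invoked. A secondary difficulty is handling the corner cases $c=0$ (no cycle to rotate) and $2c=a'-1$ (parity constraint), where the margin $a'-c>a'/2$ in the counting is tightest.
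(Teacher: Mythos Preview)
Your treatment of $a'=2$ is correct and coincides with the paper's.

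For $a'\geq 3$, however, the cycle-extension approach has a genuine gap. Take a longest $M'$-compatible cycle $C$ of length $2c<a'$ and an $M'$-arc $x'y'$ in $R=V(D')\setminus V(C)$. The insertion constraint you implicitly invoke is that no non-matching arc $y_ix_{i+1}$ of $C$ has both $y_ix'\in A(D')$ and $y'x_{i+1}\in A(D')$; this yields only $d^-_{V(C)}(x')+d^+_{V(C)}(y')\leq c$. Together with the trivial bounds $d^-_R(x'),d^+_R(y')\leq a'-c$ and $d^+(x'),d^-(y')\leq a'$, one gets $d(x')+d(y')\leq 4a'-c$, and likewise for a second $M'$-arc $x''y''$. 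Feeding this into condition $\A$ gives $6a'+2\leq 8a'-2c$, i.e.\ $c\leq a'-1$, which is vacuous. Summing over all pairs in $R$ does not help, since every pair produces the same slack. The difficulty is structural, not merely a matter of case analysis: non-insertability of single $M'$-arcs from $R$ into $C$ is too weak a hypothesis to drive the count. The corner case $c=0$ is not a secondary nuisance but a symptom of the same problem --- with no cycle there is nothing to rotate, and your argument has no starting point.

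The paper works instead with a longest $M'$-compatible \emph{path} $P=(x_1,y_1,\dots,x_s,y_s)$. Maximality forces $N^+(y_s),N^-(x_1)\subset V(P)$, and a similar dichotomy holds for $y_{s-1}$ and $x_2$ (either their relevant neighbourhoods lie in $V(P)$, or one can shift an endpoint to produce a second maximal path). Applying condition $\A$ to the four endpoints of two such overlapping maximal paths gives $s\geq(a'+1)/2$. Then, assuming no $M'$-compatible cycle of length $\geq a'$, one successively excludes the arcs $y_sx_1$, $y_sx_2$, $y_{s-1}x_1,\dots$ (each would close a long enough cycle), and every exclusion tightens the bound on $s$ by a fixed amount. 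This bootstrapping drives $s$ up to $a'$, after which a final application of $\A$ produces the cycle. That iterative arc-exclusion mechanism is the ingredient your proposal is missing.
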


\begin{proof}
Suppose first that $a'=2$. Then, we can write $X'=\{x',x''\}$ and $Y'=\{y',y''\}$, where $M'$ consists of $x'y'$ and $x''y''$. By assumption,
\[
\|D'\|=\frac{1}{2}\left( d_{D'}(x')+d_{D'}(y')+d_{D'}(x'')+d_{D'}(y'')\right) \geq\frac{6a'+2}{2}=7\,,
\]
and so $D'$ is obtained from a complete bipartite digraph of order $4$ by deleting at most one arc. Clearly, such a digraph contains a hamiltonian cycle, and the cycle determines a complete matching from $X'$ to $Y'$ with which it is compatible.\medskip

Suppose then that $a'\geq3$. Note that $D'$ contains a vertex $x'\in X'$ such that $d_{D'}^-(x')\geq2$ or a vertex $y'\in Y'$ such that $d_{D'}^+(y')\geq2$. Indeed, for if $d_{D'}^-(x)\leq1$ for all $x\in X'$ and $d_{D'}^+(y)\leq1$ for all $y\in Y'$, then choosing $x',x''\in X'$ and $y',y''\in Y'$ such that $x'y',x''y''\in M'$, we would get
\[
6a'+2\leq d_{D'}(x')+d_{D'}(y')+d_{D'}(x'')+d_{D'}(y'')\leq 4(a'+1)\,,
\]
hence $a'\leq1$; a contradiction.

Consequently, $D'$ contains an $M'$-compatible path of order at least $4$. Let $P$ denote an $M'$-compatible path in $D'$ of maximal length (among all such paths). By maximality, we can assume that the initial vertex of $P$ belongs to $X'$ and its terminal vertex belongs to $Y'$ (see \cite[Rem.\,2.2]{AA} for an overkill argument). Therefore, we can write $P=(x_1,y_1,\dots,x_s,y_s)$ for some $x_1,\dots,x_s\in X'$ and $y_1,\dots,y_s\in Y'$, where $s\geq2$. Also, by maximality of $P$, we have
\begin{equation}
\label{eq:lem41}
N^+(y_s)\subset V(P)\cap X'\quad\text{and}\quad N^-(x_1)\subset V(P)\cap Y'\,.
\end{equation}
As far as the vertex $y_{s-1}$ is concerned, there are two possibilities. Either

(a) $N^+(y_{s-1})\subset V(P)\cap X'$, or else

(b) $N^+(y_{s-1})\not\subset V(P)$.\\
In case (b), there exist $x_{s+1}\in X'\setminus V(P)$ and $y_{s+1}\in Y'\setminus V(P)$ such that $y_{s-1}x_{s+1}\in A(D')$ and $x_{s+1}y_{s+1}\in M'$. The new path $(x_1,y_1,\dots,x_{s-1},y_{s-1},x_{s+1},y_{s+1})$ is also $M'$-compatible of maximal length, and hence
\begin{equation}
\label{eq:lem43}
N^+(y_{s+1})\subset\{x_1,\dots,x_{s-1},x_{s+1}\}\,.
\end{equation}
Similarly, for the vertex $x_2$, we have either

(c) $N^-(x_2)\subset V(P)\cap Y'$, or else

(d) $N^-(x_2)\not\subset V(P)$.\\
In case (d),  there exist $x_0\in X'\setminus V(P)$ and $y_0\in Y'\setminus V(P)$ such that $x_0y_0\in M'$ and $y_0x_2\in A(D')$. The new path $(x_0,y_0,x_2,y_2,\dots,y_s,x_s)$ is also $M'$-compatible of maximal length, and hence
\begin{equation}
\label{eq:lem45}
N^-(x_0)\subset\{y_0,y_2,\dots,y_s\}\,.
\end{equation}

The rest of the proof proceeds in four cases, according to the combinations of the above conditions (a) -- (d). We claim that $D'$ contains an $M'$-compatible cycle of length at least $a'$. Suppose otherwise.

\subsubsection*{Case 1.} Suppose first that (a) and (c) hold.\\
We will apply condition $\A$ to the endpoints of the $M'$-compatible paths $(x_1,y_1,\dots,$ $x_{s-1},y_{s-1})$ and $(x_2,y_2,\dots,x_s,y_s)$. By condition $\A$, together with properties (a), (c) and \eqref{eq:lem41}, we get
\begin{multline}
\notag
6a'+2\leq d_{D'}(x_1)+d_{D'}(y_{s-1})+d_{D'}(x_2)+d_{D'}(y_s)\\
=(d_{D'}^-(x_1)+d_{D'}^+(y_{s-1})+d_{D'}^-(x_2)+d_{D'}^+(y_s))+(d_{D'}^+(x_1)+d_{D'}^-(y_{s-1})+d_{D'}^+(x_2)+d_{D'}^-(y_s))\\
\leq 4s+4a'\,,
\end{multline}
hence $s\geq(a'+1)/2$.

Now, $y_sx_1\notin A(D')$, for else $D'$ would contain an $M'$-compatible cycle $[x_1,y_1,\dots,$ $x_s,y_s]$ of length $2s\geq a'+1$. Therefore, we have
\[
N^+(y_s)\subset\{x_2,\dots,x_s\}\quad\text{and}\quad N^-(x_1)\subset\{y_1,\dots,y_{s-1}\}\,,
\]
and hence, by condition $\A$ again,
\[
6a'+2\leq (d_{D'}^-(x_1)+d_{D'}^+(y_{s-1})+d_{D'}^-(x_2)+d_{D'}^+(y_s))+4a'\leq (4s-2)+4a'\,,
\]
so that $s\geq(a'+2)/2$. In particular, $s\geq3$ (because $a'\geq3$), and thus $y_{s-1}\neq y_1$ and $x_2\neq x_s$. Moreover, $y_sx_2\notin A(D')$, for else $D'$ would contain an $M'$-compatible cycle $[x_2,y_2,\dots,x_s,y_s]$ of length $2(s-1)\geq a'$. Therefore
\begin{equation}
\label{eq:lem46}
N^+(y_s)\subset\{x_3,\dots,x_s\}\quad\text{and}\quad N^-(x_2)\subset\{y_1,\dots,y_{s-1}\}\,.
\end{equation}
Similarly, $y_{s-1}x_1\notin A(D')$, for else $D'$ would contain an $M'$-compatible cycle $[x_1,y_1,\dots,$ $x_{s-1},y_{s-1}]$ of length $2(s-1)\geq a'$. Therefore
\begin{equation}
\label{eq:lem47}
N^+(y_{s-1})\subset\{x_2,\dots,x_s\}\quad\text{and}\quad N^-(x_1)\subset\{y_1,\dots,y_{s-2}\}\,.
\end{equation}
Hence, condition $\A$ together with \eqref{eq:lem46} and \eqref{eq:lem47} imply that
\[
6a'+2\leq (d_{D'}^-(x_1)+d_{D'}^+(y_{s-1})+d_{D'}^-(x_2)+d_{D'}^+(y_s))+4a'\leq (4s-6)+4a'\,,
\]
so that $s\geq(a'+4)/2$. In particular, $s\geq4$ (because $a'\geq3$), and thus $y_{s-2}\neq y_1$ and $x_3\neq x_s$. Moreover, $y_sx_3\notin A(D')$, for else $D'$ would contain an $M'$-compatible cycle $[x_3,y_3,\dots,x_s,y_s]$ of length $2(s-2)\geq a'$. Therefore
\begin{equation}
\label{eq:lem48}
N^+(y_s)\subset\{x_4,\dots,x_s\}\,.
\end{equation}
Similarly, $y_{s-1}x_2\notin A(D')$, for else $D'$ would contain an $M'$-compatible cycle $[x_2,y_2,\dots,$ $x_{s-1},y_{s-1}]$ of length $2(s-2)\geq a'$. Therefore
\begin{equation}
\label{eq:lem49}
N^+(y_{s-1})\subset\{x_3,\dots,x_s\}\quad\text{and}\quad N^-(x_2)\subset\{y_1,\dots,y_{s-2}\}\,.
\end{equation}
Finally, $y_{s-2}x_1\notin A(D')$, for else $D'$ would contain an $M'$-compatible cycle $[x_1,y_1,$ $\dots,x_{s-2},y_{s-2}]$ of length $2(s-2)\geq a'$. Therefore
\begin{equation}
\label{eq:lem410}
N^-(x_1)\subset\{y_1,\dots,y_{s-3}\}\,.
\end{equation}
Hence, condition $\A$ together with \eqref{eq:lem48}, \eqref{eq:lem49} and \eqref{eq:lem410} imply that
\[
6a'+2\leq (d_{D'}^-(x_1)+d_{D'}^+(y_{s-1})+d_{D'}^-(x_2)+d_{D'}^+(y_s))+4a'\leq (4s-10)+4a'\,,
\]
so that $s\geq(a'+6)/2$. And so on...\medskip

One readily sees that, by continuing the above procedure, we eventually obtain $s\geq a'$; i.e., $V(P)=V(D')$. Then, by condition $\A$,
\[
6a'+2\leq (d_{V(P)}^-(x_1)+d_{V(P)}^-(x_2)+d_{V(P)}^+(y_{s-1})+d_{V(P)}^+(y_s))+4a'\,,
\]
hence $d_{V(P)}^-(x_1)+d_{V(P)}^-(x_2)\geq a'+1$ or $d_{V(P)}^+(y_{s-1})+d_{V(P)}^+(y_s)\geq a'+1$. Without loss of generality, suppose that the latter inequality holds. Then, either $d_{V(P)}^+(y_s)\geq(a'+1)/2$ or else $d_{V(P)}^+(y_{s-1})\geq(a'+1)-a'/2=(a'+2)/2$.

Now, if $d_{V(P)}^+(y_s)\geq(a'+1)/2$, then there exists $j\leq(a'+1)/2$ such that $y_sx_j\in A(D')$. Then $D'$ contains an $M'$-compatible cycle $[x_j,y_j,\dots,x_s,y_s]$ of length at least $2(a'-(a'-1)/2)=a'+1$; a contradiction. If, in turn, $d_{V(P)}^+(y_{s-1})\geq(a'+2)/2$, then there exists $j\leq a'/2$ such that $y_{s-1}x_j\in A(D')$. Then $D'$ contains an $M'$-compatible cycle $[x_j,y_j,\dots,x_{s-1},y_{s-1}]$ of length at least $2((a'-1)-(a'/2-1))=a'$. The contradiction completes the proof of Case 1.

\subsubsection*{Case 2.} Suppose now that (a) and (d) hold.\\
We will apply condition $\A$ to the endpoints of the $M'$-compatible paths $(x_1,y_1,\dots,$ $x_{s-1},y_{s-1})$ and $(x_0,y_0,x_2,y_2,\dots,x_s,y_s)$. By condition $\A$ together with \eqref{eq:lem41}, (a) and \eqref{eq:lem45},
\[
6a'+2\leq (d_{D'}^-(x_1)+d_{D'}^+(y_{s-1})+d_{D'}^-(x_0)+d_{D'}^+(y_s))+4a'\leq 4s+4a'\,,
\]
hence $s\geq(a'+1)/2$.

Now, $y_sx_1\notin A(D')$, for else $D'$ would contain an $M'$-compatible cycle $[x_1,y_1,$ $\dots,x_s,y_s]$ of length $2s\geq a'+1$. Therefore, we have
\[
N^+(y_s)\subset\{x_2,\dots,x_s\}\quad\text{and}\quad N^-(x_1)\subset\{y_1,\dots,y_{s-1}\}\,.
\]
Also, by maximality of $P$, $y_sx_0\notin A(D')$. Therefore, we have
\[
N^-(x_0)\subset\{y_0,y_2,\dots,y_{s-1}\}\,,
\]
and hence, by condition $\A$ again,
\[
6a'+2\leq (d_{D'}^-(x_1)+d_{D'}^+(y_{s-1})+d_{D'}^-(x_0)+d_{D'}^+(y_s))+4a'\leq (4s-3)+4a'\,,
\]
so that $s\geq(a'+\frac{5}{2})/2$. In particular, $s\geq3$ (because $a'\geq3$), and thus $y_{s-1}\neq y_1$. By (d), also $y_{s-1}\neq y_0$. Moreover, $y_sx_2\notin A(D')$, for else $D'$ would contain an $M'$-compatible cycle $[x_2,y_2,\dots,x_s,y_s]$ of length $2(s-1)\geq a'+1/2$. Therefore
\begin{equation}
\label{eq:lem411}
N^+(y_s)\subset\{x_3,\dots,x_s\}\,.
\end{equation}
Similarly, $y_{s-1}x_1\notin A(D')$, for else $D'$ would contain an $M'$-compatible cycle $[x_1,y_1,\dots,$ $x_{s-1},y_{s-1}]$ of length $2(s-1)\geq a'+1/2$. Therefore
\begin{equation}
\label{eq:lem412}
N^+(y_{s-1})\subset\{x_2,\dots,x_s\}\quad\text{and}\quad N^-(x_1)\subset\{y_1,\dots,y_{s-2}\}\,.
\end{equation}
By (a), also $y_{s-1}x_0\notin A(D')$. Therefore
\begin{equation}
\label{eq:lem413}
N^-(x_0)\subset\{y_0,y_2,\dots,y_{s-2}\}\,.
\end{equation}
Hence, condition $\A$ together with \eqref{eq:lem411}, \eqref{eq:lem412} and \eqref{eq:lem413} imply that
\[
6a'+2\leq (d_{D'}^-(x_1)+d_{D'}^+(y_{s-1})+d_{D'}^-(x_0)+d_{D'}^+(y_s))+4a'\leq (4s-7)+4a'\,,
\]
so that $s\geq(a'+\frac{9}{2})/2$.

As in Case 1, after finitely many steps, the above procedure terminates with $s\geq a'$; i.e., $V(P)=V(D')$. This, however, is impossible, because, by condition (d), $P$ does not contain vertices $x_0$ and $y_0$. The contradiction completes the proof of Case 2.

\subsubsection*{Case 3.} Suppose now that (b) and (c) hold.\\
This configuration is analogous to the one in Case 2. The reader may easily adapt the proof of Case 2 to the $M'$-compatible paths $(x_2,y_2,\dots,x_s,y_s)$ and $(x_1,y_1,\dots,x_{s-1},y_{s-1},x_{s+1},y_{s+1})$.

\subsubsection*{Case 4.} Finally, suppose that (b) and (d) hold.\\
First, we want to rule out the possibility that $x_0=x_{s+1}$ (hence also $y_0=y_{s+1}$). If that were the case, then, to simplify notation, set $x':=x_0=x_{s+1}$ and $y':=y_0=y_{s+1}$. Observe that $y_{s+1}x_1\notin A(D')$ and $y_sx_0\notin A(D')$, by maximality of $P$ (see \eqref{eq:lem41}). Hence, by \eqref{eq:lem43} and \eqref{eq:lem45}, $d_{D'}^+(y')\leq s-1$ and $d_{D'}^-(x')\leq s-1$. Therefore, by applying condition$\A$ to the endpoints of the $M'$-compatible paths $(x_1,y_1,\dots,x_s,y_s)$ and $x'y'$, we get
\[
6a'+2\leq (d_{D'}^-(x_1)+d_{D'}^+(y_s)+d_{D'}^-(x')+d_{D'}^+(y'))+4a'\leq (4s-2)+4a'\,,
\]
hence $s\geq(a'+2)/2$. On the other hand, by (b), we have $y_{s-1}x'=y_{s-1}x_{s+1}\in A(D')$, and so $D'$ contains an $M'$-compatible cycle $[x_0,y_0,x_2,y_2,\dots,x_{s-1},y_{s-1}]$ of length $2(s-1)\geq a'$; a contradiction.

We thus have $x_0\neq x_{s+1}$, and hence $y_0\neq y_{s+1}$. Consequently, the $M'$-compatible paths $(x_1,y_1,\dots,x_{s-1},y_{s-1},x_{s+1},y_{s+1})$ and $(x_0,y_0,x_2,y_2,\dots,x_s,y_s)$ have pairwise distinct initial and terminal points. One can, once more, easily adapt the argument of Case 2 to these paths. This completes the proof of the lemma.
\end{proof}

\begin{lemma}
\label{lem:5}
Let $D$ be a balanced bipartite digraph with colour classes $X$ and $Y$ of cardinalities $a$, where $a\geq2$, and let $M$ be a complete matching from $X$ to $Y$ in $D$. Suppose that $D$ contains $M$-compatible cycles $C_1,\dots,C_l$ (of length at least $4$ each) such that $C_1$ is of maximal length among all $M$-compatible cycles in $D$ and, for every $1\leq j<l$, $C_{j+1}$ is of maximal length among all $M$-compatible cycles in $D-(V(C_1)\cup\dots\cup V(C_j))$. Set $D'=D-(V(C_1)\cup\dots\cup V(C_l))$ and $a'=|D'|/2$.
If $D$ satisfies condition $\M$ and $a'\geq2$, then $D'$ satisfies condition $\A$, that is,
\[
d_{D'}(x')+d_{D'}(y')+d_{D'}(x'')+d_{D'}(y'')\geq 6a'+2
\]
for all pairwise distinct $x',x''\in V(D')\cap X$ and $y',y''\in V(D')\cap Y$ such that $D'$ contains $M$-compatible paths from $x'$ to $y'$ and from $x''$ to $y''$.
\end{lemma}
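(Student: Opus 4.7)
The plan is to relate $d_{D'}(v)$ to $d_D(v)$ for each $v\in S:=\{x',y',x'',y''\}$ via
\[
d_D(v)=d_{D'}(v)+\sum_{j=1}^{l}d_{V(C_j)}(v),
\]
lower-bounding $\sum_{v\in S}d_D(v)$ by condition $\M$, and upper-bounding each contribution $d_{V(C_j)}(v)$ by exploiting the maximality of $C_j$. Since $\{x',x''\}\subseteq X$ and $\{y',y''\}\subseteq Y$ form non-adjacent pairs in $D$, condition $\M$ yields $d_D(x')+d_D(x'')\geq 3a+1$ and $d_D(y')+d_D(y'')\geq 3a+1$, so $\sum_{v\in S}d_D(v)\geq 6a+2$. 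Since $\sum_{j=1}^{l}|V(C_j)|=2(a-a')$, it will suffice to prove the per-cycle bound
\[
\sum_{v\in S}d_{V(C_j)}(v)\leq 3|V(C_j)|\quad\text{for every }j\in\{1,\dots,l\};
\]
summing over $j$ and subtracting then yields $\sum_{v\in S}d_{D'}(v)\geq (6a+2)-6(a-a')=6a'+2$, as required.

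For the per-cycle bound, fix $j$, write $C_j=[u_1,w_1,\dots,u_m,w_m]$ with $u_iw_i\in M$ and $w_iu_{i+1}\in A(D)\setminus M$ (indices mod $m$), and argue by splicing. The key preliminary observation is that every $M$-compatible path in $D$ from a vertex of $X$ to a vertex of $Y$ must begin and end with an $M$-arc: since $M\subseteq X\times Y$, every $Y\to X$ arc is non-$M$, so alternation forces every $X\to Y$ arc on such a path to lie in $M$. Hence the given paths $P_1$ from $x'$ to $y'$ and $P_2$ from $x''$ to $y''$ each have $M$-arcs as first and last arcs. The splicing step is then: if for some $i$ both $w_ix'\in A(D)$ and $y'u_{i+1}\in A(D)$, replacing the arc $w_iu_{i+1}$ of $C_j$ by the walk $w_i,x',P_1,y',u_{i+1}$ produces a cycle $C'$ with $V(C')=V(C_j)\cup V(P_1)$. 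The two gluing arcs $w_ix'$ and $y'u_{i+1}$ are non-$M$ and surround the $M$-bordered path $P_1$, so alternation is preserved; $V(P_1)\subseteq V(D')$ is disjoint both from $V(C_j)$ and from $V_{j-1}:=V(C_1)\cup\dots\cup V(C_{j-1})$. Hence $C'$ is an $M$-compatible cycle in $D-V_{j-1}$ strictly longer than $C_j$, contradicting the choice of $C_j$.

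Thus for each $i$ at most one of $w_ix',\ y'u_{i+1}$ is an arc of $D$, and summing cyclically yields $|\{w_ix'\in A(D)\}|+|\{y'u_i\in A(D)\}|\leq m$. The analogous argument using $P_2$ gives $|\{w_ix''\in A(D)\}|+|\{y''u_i\in A(D)\}|\leq m$. The four remaining types of arcs between $S$ and $V(C_j)$---those of the forms $x'w_i$, $u_iy'$, $x''w_i$, $u_iy''$---trivially contribute at most $m$ arcs each, so altogether
\[
\sum_{v\in S}d_{V(C_j)}(v)\leq m+m+4m=6m=3|V(C_j)|,
\]
as needed. The main obstacle is the careful verification of $M$-compatibility and vertex-disjointness of the spliced cycle $C'$; both become transparent once one notes the automatic ``$M$-bordering'' of every $M$-compatible $X\to Y$ path in $D$.
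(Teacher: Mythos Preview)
Your argument is correct and follows essentially the same route as the paper: apply condition~$\M$ to the same-class pairs $\{x',x''\}$ and $\{y',y''\}$ to get $\sum_{v\in S}d_D(v)\geq 6a+2$, bound $d_{V(C_j)}^-(x')+d_{V(C_j)}^+(y')\leq |C_j|/2$ (and likewise for $x'',y''$) via the splicing contradiction with maximality of $C_j$, bound the remaining four half-degrees trivially by $|C_j|/2$ each, and subtract. Your explicit ``$M$-bordering'' remark, justifying that the spliced cycle stays $M$-compatible, is a detail the paper leaves implicit.
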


\begin{proof}
Choose pairwise distinct $x',x''\in V(D')\cap X$ and $y',y''\in V(D')\cap Y$ such that $D'$ contains $M$-compatible paths from $x'$ to $y'$ and from $x''$ to $y''$. Note that, for every $1\leq j\leq l$,
\[
d_{V(C_j)}^-(x')+d_{V(C_j)}^+(y')\leq|C_j|/2\quad\text{and}\quad d_{V(C_j)}^-(x'')+d_{V(C_j)}^+(y'')\leq|C_j|/2\,.
\]
Indeed, for if, for instance, $d_{V(C_{j_0})}^-(x')+d_{V(C_{j_0})}^+(y')>|C_{j_0}|/2$ for some $j_0\in\{1,\dots,l\}$, then $C_{j_0}$ contains an arc $y^*x^*$ such that $y^*x',y'x^*\in A(D)$. Replacing $y^*x^*$ in $C_{j_0}$ with the path $(y^*,x',\dots,y',x^*)$ gives an $M$-compatible cycle in $D-(V(C_1)\cup\dots\cup V(C_{j_0-1}))$ of length strictly greater than $|C_{j_0}|$, which contradicts the choice of $C_{j_0}$.\\
Now, condition $\M$ implies
\begin{multline}
\notag
2(3a+1)\leq(d_D(x')+d_D(x''))+(d_D(y')+d_D(y''))=\\
\sum_{j=1}^l\left( (d_{V(C_j)}^-(x')+d_{V(C_j)}^+(y'))+(d_{V(C_j)}^-(x'')+d_{V(C_j)}^+(y''))\right)\\
+\sum_{j=1}^l\left( d_{V(C_j)}^+(x')+d_{V(C_j)}^-(y')+d_{V(C_j)}^+(x'')+d_{V(C_j)}^-(y'')\right)\\
+\left( d_{D'}(x')+d_{D'}(y')+d_{D'}(x'')+d_{D'}(y'')\right)\\
\leq \sum_{j=1}^l2\frac{|C_j|}{2}+\sum_{j=1}^l4\frac{|C_j|}{2}+\left(d_{D'}(x')+d_{D'}(y')+d_{D'}(x'')+d_{D'}(y'')\right)\,,
\end{multline}
hence
\[
d_{D'}(x')+d_{D'}(y')+d_{D'}(x'')+d_{D'}(y'')\geq (6a+2)-6\sum_{j=1}^l\frac{|C_j|}{2}=6a'+2\,,
\]
as required.
\end{proof}

\medskip

\section{Proof of Theorem~\ref{thm:main}}
\label{sec:main-proof}

For a proof by contradiction, suppose that $D$ is a balanced bipartite digraph with colour classes $X$ and $Y$ of cardinalities $a\geq2$, which satisfies condition $\M$ and contains no cycle of length $2a$. By Lemma~\ref{lem:2}, $D$ contains a complete matching from $X$ to $Y$ or from $Y$ to $X$. For the rest of the proof, assume, without loss of generality, that there exists a complete matching from $X$ to $Y$ in $D$.

\subsection{Decomposition into cycles}
\label{subsec:cycles}
First, we shall show that $D$ contains a complete matching $M$ from $X$ to $Y$ and $M$-compatible cycles $C_1,\dots,C_k$ (of length at least $4$ each), for some $k\geq1$, all such that:
\begin{itemize}
\item[(i)] $V(D)=V(C_1)\cup\dots\cup V(C_k)\cup V^r$ is a disjoint union, where $|V^r|=2$ or $V^r=\varnothing$.
\item[(ii)] $C_1$ is of maximal length among all cycles compatible with some complete matching from $X$ to $Y$, and, for every $j=1,\dots,k-1$, $C_{j+1}$ is of maximal length among all cycles compatible with some complete matching from $X\cap V(D-(V(C_1)\cup\dots\cup V(C_j)))$ to $Y\cap V(D-(V(C_1)\cup\dots\cup V(C_j)))$.
\item[(iii)] $|C_1|\geq a$ and, for every $j=1,\dots,k-1$, $C_{j+1}$ passes through at least half the vertices of $D-(V(C_1)\cup\dots\cup V(C_j))$.
\end{itemize}

We will construct $M$ and the cycles $C_1,\dots,C_k$ recursively, by an alternate use of Lemmas~\ref{lem:4} and~\ref{lem:5}:
By assumption, $D$ satisfies condition $\M$, hence also condition $\A$. We can thus apply Lemma~\ref{lem:4} to $D$. By Lemma~\ref{lem:4}, if $a=2$, then $D$ contains a hamiltonian cycle, contrary to our hypothesis. Thus $a\geq3$, and hence, by Lemma~\ref{lem:4} again, there is a cycle in $D$, of length at least $a$, compatible with a complete matching from $X$ to $Y$.

Let $C_1$ be a cycle in $D$ of maximal lenght among all cycles compatible with some complete matching from $X$ to $Y$, and let $M_1$ be a complete matching from $X$ to $Y$ with which $C_1$ is compatible. By assumption, $a-|C_1|/2\geq1$. If, in fact, $a-|C_1|/2=1$, then setting $M=M_1$ we are done. If $a-|C_1|/2\geq2$, then we set $D'=D-V(C_1)$ and apply to it Lemma~\ref{lem:5}, to get that $D'$ satisfies condition $\A$. We can thus apply Lemma~\ref{lem:4} to $D'$. Set $a'=|D'|/2$. If $a'=2$, then Lemma~\ref{lem:4} implies that $D'$ contains a cycle $C_2$ of length $4$, which defines a complete matching $M_2$ from $X\cap V(D')$ to $Y\cap V(D')$. Setting $M$ to coincide with $M_1$ on $V(C_1)$ and with $M_2$ on $V(C_2)$, we are done. If, in turn, $a'\geq3$, then, by Lemma~\ref{lem:4}, $D'$ contains an $M_1$-compatible cycle of length at least $a'$.

Let $C_2$ be a cycle in $D'$ of maximal lenght among all cycles compatible with some complete matching from $X\cap V(D')$ to $Y\cap V(D')$, and let $M_2$ be a complete matching from $X\cap V(D')$ to $Y\cap V(D')$ with which $C_2$ is compatible. If $V(D')=V(C_2)$ or $|V(D')\setminus V(C_2)|\leq2$, then we define $M$ to coincide with $M_2$ on $V(C_2)$ and with $M_1$ on $V(D)\setminus V(C_2)$, and the construction is complete. Otherwise, we set $D''=D-(V(C_1)\cup V(C_2))$ and $a''=|D''|/2$. We have $a''\geq2$, hence we can apply Lemma~\ref{lem:5} to $D''$, to get that $D''$ satisfies condition $\A$. We can thus apply Lemma~\ref{lem:4} to $D''$. If $a''=2$, then Lemma~\ref{lem:4} implies that $D''$ contains a cycle $C_3$ of length $4$, which defines a complete matching $M_3$ from $X\cap V(D'')$ to $Y\cap V(D'')$. Setting $M$ to coincide with $M_1$ on $V(C_1)$, with $M_2$ on $V(C_2)$, and with $M_3$ on $V(C_3)$, we are done. If, in turn, $a''\geq3$, then, by Lemma~\ref{lem:4}, $D''$ contains an $M_2$-compatible cycle of length at least $a''$.

We can choose now a cycle $C_3$ in $D''$ of maximal lenght among all cycles compatible with some complete matching from $X\cap V(D'')$ to $Y\cap V(D'')$, and let $M_3$ be a complete matching from $X\cap V(D'')$ to $Y\cap V(D'')$ with which $C_3$ is compatible. We can continue the above procedure until the remaining set of vertices is empty or of cardinality $2$, as required.\medskip

Having constructed the matching $M$ and cycles $C_1,\dots,C_k$ as above, let us introduce the following notation and terminology. For $j=1,\dots,k$, set $D_j=D[V(C_j)]$, and set $D_{k+1}=D[V^r]$ provided $V^r\neq\varnothing$. For convenience, we will call $D_1,\dots,D_k,D_{k+1}$ the \emph{components} of $D$. (Of course, $D$ is connected, by Lemma~\ref{lem:3}, so this terminology should cause no confusion.) Let $c_j=|D_j|/2$, $j=1,\dots,k$. Further, let $R_1=D$ and, for $j=2,\dots,k$, let $R_j=D-(V(C_1)\cup\dots\cup V(C_{j-1}))$, and $R_{k+1}=D_{k+1}$ provided $V^r\neq\varnothing$. Set $a_j=|R_j|/2$, $j=1,\dots,k$. Then, by construction,
\begin{equation}
\label{eq:c_j}
a_j\geq c_j\geq\frac{a_j}{2}\quad\text{for\ }j=1,\dots,k,\qquad\text{and}\qquad 2\leq c_j\leq c_{j-1}\quad\text{for\ }j=2,\dots,k\,.
\end{equation}
\smallskip

Next, we shall prove the following:
\subsection{Claim.}
\label{subsec:claim}
If $V^r\neq\varnothing$, then $R_k$ contains an arc from $Y\cap V(D_k)$ to $V^r$ or an arc from $V^r$ to $X\cap V(D_k)$.
Moreover, for every $m=0,\dots,k-1$, either
\begin{itemize}
\item[(1)] there exists $0\leq l\leq m-1$ such that $R_{k-l}$ consists of at least two components of $D$ and contains an $M$-compatible path $P_{k-l}$ with the following properties: the initial and terminal vertex of $P_{k-l}$ are in $D_{k-l}$, $P_{k-l}$ contains no other vertices of $D_{k-l}$, and $A(D_j)\cap A(P_{k-l})\neq\varnothing$ for every component $D_j$ in $R_{k-(l-1)}$; or else
\item[(2)] $R_{k-(m-1)}$ contains an $M$-compatible path $P_{k-(m-1)}$ with the following properties: precisely one endpoint of $P_{k-(m-1)}$ lies in $D_{k-(m-1)}$, $P_{k-(m-1)}$ contains no other vertices of $D_{k-(m-1)}$, $A(D_j)\cap A(P_{k-(m-1)})\neq\varnothing$ for every component $D_j$ in $R_{k-(m-2)}$, and $P_{k-(m-1)}$ cannot be extended to an $M$-compatible path with both endpoints in $D_{k-(m-1)}$ and $A(D_{k-(m-1)})\cap A(P_{k-(m-1)})=\varnothing$. In this case, $P_{k-(m-1)}$ can be extended to an $M$-compatible path with one endpoint in $D_{k-m}$. (In case when $R_{k-(m-2)}=\varnothing$, $P_{k-(m-1)}$ consists of a single vertex.)
\end{itemize}
Finally, if $m=k-1$ and $R_2=R_{k-(m-1)}$ satisfies condition (2) above, then the path $P_2$ can be extended to an $M$-compatible path with both ends in $D_1$.\medskip

We will proceed by induction on $m$. First, suppose that $V^r\neq\varnothing$ and $R_k$ contains no arc from $Y\cap V(D_k)$ to $V^r$ nor from $V^r$ to $X\cap V(D_k)$. Write $V^r=\{u,v\}$, where $u\in X$ and $v\in Y$. Then we have $d_{D_k}^-(u)=d_{D_k}^+(v)=0$, hence $d_{R_k}^-(u)\leq1$ and $d_{R_k}^+(v)\leq1$. Let $x'\in X\cap V(D_k)$ and $y'\in Y\cap V(D_k)$ be arbitrary. By Lemma~\ref{lem:5}, $R_k$ satisfies condition $\A$, and hence
\begin{multline}
\notag
6a_k+2\leq d_{R_k}(u)+d_{R_k}(v)+d_{R_k}(x')+d_{R_k}(y')\\
=(d_{R_k}^-(u)+d_{R_k}^+(v))+(d_{R_k}^+(u)+d_{R_k}^-(v)+d_{R_k}^+(x')+d_{R_k}^-(y'))+(d_{R_k}^-(x')+d_{R_k}^+(y'))\\
\leq 2+4a_k+(d_{R_k}^-(x')+d_{R_k}^+(y'))\,.
\end{multline}
Consequently, $d_{R_k}^-(x')+d_{R_k}^+(y')\geq2a_k=2c_k+2$, and so $R_k$ contains the arcs $vx'$, $y'u$; a contradiction. This proves the first statement of Claim~\ref{subsec:claim}, as well as establishes the basis for induction in case $V^r\neq\varnothing$.

If, in turn, $V^r=\varnothing$, then $k\geq2$ (as $D$ is not hamiltonian, by hypothesis) and it suffices to show that $R_{k-1}$ contains an arc from $Y\cap V(D_{k-1})$ to $X\cap V(D_k)$ or from $Y\cap V(D_k)$ to $X\cap V(D_{k-1})$. Suppose otherwise. Then $d_{D_{k-1}}^-(x)=d_{D_{k-1}}^+(y)=0$ for all $x\in X\cap V(D_k)$ and $y\in Y\cap V(D_k)$. By Lemma~\ref{lem:5}, $R_{k-1}$ satisfies condition $\A$, and hence, for any pairwise disjoint $x',x''\in X\cap V(D_k)$ and $y',y''\in Y\cap V(D_k)$, we have
\begin{multline}
\notag
6a_{k-1}+2\leq d_{R_{k-1}}(x')+d_{R_{k-1}}(y')+d_{R_{k-1}}(x'')+d_{R_{k-1}}(y'')\\
=(d_{R_{k-1}}^-(x')+d_{R_{k-1}}^+(y')+d_{R_{k-1}}^-(x'')+d_{R_{k-1}}^+(y''))\\
+(d_{R_{k-1}}^+(x')+d_{R_{k-1}}^-(y')+d_{R_{k-1}}^+(x'')+d_{R_{k-1}}^-(y''))\\
=(d_{D_k}^-(x')+d_{D_k}^+(y')+d_{D_k}^-(x'')+d_{D_k}^+(y''))
+(d_{R_{k-1}}^+(x')+d_{R_{k-1}}^-(y')+d_{R_{k-1}}^+(x'')+d_{R_{k-1}}^-(y''))\\
\leq 4c_k+4a_{k-1}\,.
\end{multline}
Consequently, $2c_k\geq a_{k-1}+1=c_{k-1}+c_k+1$, which contradicts \eqref{eq:c_j}.
\medskip

Suppose now that $R_{k-m}$ does not satisfy condition (1) of Claim~\ref{subsec:claim}. Then, by the inductive hypothesis, $R_{k-(m-1)}$ contains an $M$-compatible path $P_{k-(m-1)}$ with the following properties: precisely one endpoint of $P_{k-(m-1)}$ lies in $D_{k-(m-1)}$, $P_{k-(m-1)}$ contains no other vertices of $D_{k-(m-1)}$, $A(P_{k-(m-1)})\cap A(D_j)\neq\varnothing$ for every component $D_j$ in $R_{k-(m-2)}$, and $P_{k-(m-1)}$ cannot be extended to an $M$-compatible path with both endpoints in $D_{k-(m-1)}$ and $A(D_{k-(m-1)})\cap A(P_{k-(m-1)})=\varnothing$.
As for the orientation of $P_{k-(m-1)}$, there are two possibilities: either its initial point lies in $D_{k-(m-1)}$ and the terminal point lies in $D_{k-l}$ for some $l<m-1$, or the initial point lies in $D_{k-l}$ for some $l<m-1$ and the terminal point lies in $D_{k-(m-1)}$. The argument in both case is virtually the same, so we will assume, without loss of generality, that the former is the case.

We will show that $P_{k-(m-1)}$ can be extended to an $M$-compatible path with one endpoint in $D_{k-m}$. Suppose otherwise. Then
\begin{equation}
\label{eq:no-extension}
\begin{aligned}
d&_{D_{k-m}}^-(x)=0\quad\text{for\ all\ }x\in X\cap V(D_{k-(m-1)})\,,\\
d&_{D_{k-m}}^+(y)=0\quad\text{for\ all\ }y\in Y\cap V(D_{k-l})\,,\quad\text{and}\\
d&_{D_{k-(m-1)}}^+(y)=0\quad\text{for\ all\ }y\in Y\cap V(D_{k-l})\,,
\end{aligned}
\end{equation}
where the first (resp. second) line in \eqref{eq:no-extension} follows from the fact that $P_{k-(m-1)}$ cannot be a extended to an $M$-compatible path with the initial (resp. terminal) vertex in $D_{k-m}$, and the last line follows from the assumption that $P_{k-(m-1)}$ cannot be extended to an $M$-compatible path with both endpoints in $D_{k-(m-1)}$.

We claim that then there exists $y^*\in Y\cap V(D_{k-(m-1)})$ such that $d_{D_{k-m}}^+(y^*)>0$. Suppose otherwise. Since, by Lemma~\ref{lem:5}, $R_{k-m}$ satisfies condition $\A$, then, by \eqref{eq:no-extension}, for any pairwise disjoint $x',x''\in X\cap V(D_{k-(m-1)})$ and $y',y''\in Y\cap V(D_{k-(m-1)})$, we get
\begin{multline}
\notag
6a_{k-m}+2\leq d_{R_{k-m}}(x')+d_{R_{k-m}}(y')+d_{R_{k-m}}(x'')+d_{R_{k-m}}(y'')\\
=(d_{R_{k-(m-1)}}^-(x')+d_{R_{k-(m-1)}}^+(y')+d_{R_{k-(m-1)}}^-(x'')+d_{R_{k-(m-1)}}^+(y''))\\
+(d_{R_{k-m}}^+(x')+d_{R_{k-m}}^-(y')+d_{R_{k-m}}^+(x'')+d_{R_{k-m}}^-(y''))\\
\leq 4a_{k-(m-1)}+4a_{k-m}\,,
\end{multline}
hence $a_{k-m}+1\leq 2a_{k-(m-1)}=2(a_{k-m}-c_{k-m})$, and so $c_{k-m}\leq(a_{k-m}-1)/2$, which contradicts \eqref{eq:c_j}.

It follows that there exist distinct $x',x''\in X\cap V(D_{k-(m-1)})$, $y'\in Y\cap V(D_{k-m})$ and $y''\in Y\cap V(D_{k-l})$ such that $R_{k-m}$ contains $M$-compatible paths from $x'$ to $y'$ and from $x''$ to $y''$. Indeed, if $y^*\in Y\cap V(D_{k-(m-1)})$ is as above, then there exists $x^*\in X\cap V(D_{k-m})$ such that $y^*x^*\in A(R_{k-m})$, and this connection from $D_{k-(m-1)}$ to $D_{k-m}$ allows one to construct the first path. A path from $x''$ to $y''$ is constructed by first following $C_{k-(m-1)}$ from $x''$ to the initial point of $P_{k-(m-1)}$, then following $P_{k-(m-1)}$ until it reaches $C_{k-l}$, and then following $C_{k-l}$ until $y''$. By condition $\A$ and \eqref{eq:no-extension} again, we obtain
\begin{multline}
\notag
6a_{k-m}+2\leq d_{R_{k-m}}(x')+d_{R_{k-m}}(y')+d_{R_{k-m}}(x'')+d_{R_{k-m}}(y'')\\
=(d_{R_{k-(m-1)}-V(D_{k-l})}^-(x')+d_{R_{k-(m-1)}-V(D_{k-l})}^-(x''))+d_{R_{k-m}-V(D_{k-(m-1)})}^+(y')\\
+d_{R_{k-(m-2)}}^+(y'')+(d_{R_{k-m}}^+(x')+d_{R_{k-m}}^-(y')+d_{R_{k-m}}^+(x'')+d_{R_{k-m}}^-(y''))\\
\leq 2(a_{k-(m-1)}-c_{k-l})+(a_{k-m}-c_{k-(m-1)})+a_{k-(m-2)}+4a_{k-m}\\
=2a_{k-(m-1)}-2c_{k-l}+a_{k-m}-c_{k-(m-1)}+(a_{k-m}-c_{k-m}-c_{k-(m-1)})+4a_{k-m}\,,
\end{multline}
hence
\begin{multline}
\label{eq:last-line}
2\leq 2a_{k-(m-1)}-2c_{k-l}-2c_{k-(m-1)}-c_{k-m}\\
=(a_{k-(m-1)}-2c_{k-(m-1)})+(a_{k-(m-1)}-c_{k-m})-2c_{k-l}\,.
\end{multline}
By \eqref{eq:c_j}, both $a_{k-(m-1)}-2c_{k-(m-1)}$ and $a_{k-(m-1)}-c_{k-m}$ are at most $0$, and so the content of the second line of \eqref{eq:last-line} is negative; a contradiction.

To complete the proof of Claim~\ref{subsec:claim}, it remains to show that if $R_2$ satisfies condition (2) of the claim, then the path $P_2$ can be extended to an $M$-compatible path with both ends in $D_1$. As above, without loss of generality, assume that the initial vertex of $P_2$ lies in $D_2$ and its terminal vertex lies in $D_{k-l}$ for some $l\leq k-2$ (in case when $D$ consists of precisely two components, we have $D_{k-l}=D_2$ and hence $P_2$ is a single vertex). We have already established that $D=R_1$ contains at least one of the following: an arc from $Y\cap V(D_1)$ to $X\cap V(D_2)$, or an arc from $Y\cap V(D_{k-l})$ to $X\cap V(D_1)$. We want to show that, in fact, $D$ contains both kinds of arcs. Suppose otherwise; say, suppose $D$ does not contain an arc from $Y\cap V(D_{k-l})$ to $X\cap V(D_1)$. If $|D_{k-l}|>2$, then, for any distinct $y',y''\in Y\cap V(D_{k-l})$, we have $d_D^+(y')=d_{R_2}^+(y')$ and $d_D^+(y'')=d_{R_2}^+(y'')$. Therefore, since $D$ satisfies condition $\M$, we get
\begin{multline}
\notag
3a+1\leq d_D(y')+d_D(y'')=(d_{R_2}^+(y')+d_{R_2}^+(y''))+(d_D^-(y')+d_D^-(y''))\\
\leq 2(a-c_1)+2a\,,
\end{multline}
hence $c_1\leq(a-1)/2$, which contradicts \eqref{eq:c_j}. If, in turn, $|D_{k-l}|=2$, then the sole vertex of $Y\cap V(D_{k-l})$ must dominate a vertex of $X\cap V(D_1)$, by Remark~\ref{rem:non-hamiltonian} and because condition (1) of Claim~\ref{subsec:claim} does not hold in $R_2$. The proof in the case when $D$ contains no arc from $Y\cap V(D_1)$ to $X\cap V(D_2)$ is analogous.

\subsection{Extending a maximal cycle. Ooops...}
We will now complete the proof of Theorem~\ref{thm:main} by showing that the path from Claim~\ref{subsec:claim} can be used to extend one of the maximal cycles constructed in \ref{subsec:cycles}, thus contradicting its maximality.

By Claim~\ref{subsec:claim}, we can choose $m\in\{0,\dots,k-1\}$ such that $R_{k-m}$ consists of at least two components of $D$ and contains an $M$-compatible path $P'$ with the following properties:
\begin{itemize}
\item[(1)] the initial and terminal vertex of $P'$ are in $D_{k-m}$,
\item[(2)] $P'$ contains no other vertices of $D_{k-m}$,
\item[(3)] $A(P')\cap A(D_j)\neq\varnothing$ for every component $D_j$ in $R_{k-(m-1)}$.
\item[(4)] If, moreover, $R_{k-(m-1)}$ contains at least two components of $D$, then $P'-V(D_{k-m})$ could not be extended to an $M$-compatible cycle in $R_{k-(m-1)}$.
\end{itemize}
By $M$-compatibility, the initial vertex of $P'$ belongs to $Y$ and its terminal vertex belongs to $X$. Let $P$ be the path obtained from $P'$ by removing these two endpoints, and write $P=(u,\dots,v)$, where $u\in X$ and $v\in Y$. Write $C_{k-m}=[x_1,y_1,\dots,x_{c_{k-m}},y_{c_{k-m}}]$, according to the $M$-compatible orientation of $C_{k-m}$. Then, there exist $y_i$ and $x_j$ in $C_{k-m}$ such that $y_iu,vx_j\in A(D)$. Choose $i_0,j_0\in\{1,\dots,c_{k-m}\}$ such that $y_{i_0}u,vx_{j_0}\in A(D)$, and, if $P^{i_0j_0}$ is the path from $x_{i_0+1}$ to $y_{j_0-1}$ on $C_{k-m}$, then $y_\nu u\notin A(D)$ and $vx_\nu\notin A(D)$ for all $y_\nu\in Y\cap V(P^{i_0j_0})$ and $x_\nu\in X\cap V(P^{i_0j_0})$. Set $\mu=|P^{i_0j_0}|/2$. Of course, $\mu\geq1$, for else $C_{k-m}$ could be extended to a strictly longer $M$-compatible cycle by replacing the arc $y_{i_0}x_{j_0}$ in $C_{k-m}$ with the path $(y_{i_0},u,\dots,v,x_{j_0})$.

By condition (4) above, $u$ and $v$ belong to different components of $D$, unless (i) $m=0$ and $V^r\neq\varnothing$, or (ii) $m=1$ and $V^r=\varnothing$. Suppose first that neither (i) nor (ii) hold. 
Note that
\begin{equation}
\label{eq:u-v}
d_{D_{k-m}}^-(u)+d_{D_{k-m}}^+(v)\leq(c_{k-m}-\mu-1)+2=c_{k-m}-\mu+1\,,
\end{equation}
for else $C_{k-m}$ would contain consecutive vertices $y_s\in Y\cap V(D_{k-m})$ and $x_{s+1}\in X\cap V(D_{k-m})$ such that $y_su,vx_{s+1}\in A(D)$. Consequently, $R_{k-m}$ would contain an $M$-compatible cycle $[x_{s+1},y_{s+1},\dots,y_s,u,\dots,v]$ of length strictly greater than $|C_{k-m}|$, contradicting the maximality of $C_{k-m}$ in $R_{k-m}$.

Next, observe that $x_{i_0+1}$ (the successor of $y_{i_0}$ on $C_{k-m}$) and $y_{j_0-1}$ (the predecessor of $x_{j_0}$ on $C_{k-m}$) satisfy
\begin{equation}
\label{eq:co-u-v}
d_{R_{k-(m-1)}}^-(x_{i_0+1})=d_{R_{k-(m-1)}}^+(y_{j_0-1})=0\,.
\end{equation}
Indeed, for if, for example, $x_{i_0+1}$ were dominated by a vertex $y^*$ from one of the components of $R_{k-(m-1)}$, then one could replace the arc $y_{i_0}x_{i_0+1}$ in $C_{k-m}$ with an $M$-compatible path $(y_{i_0},u,\dots,y^*,x_{i_0+1})$. (The fact that every such $y^*$ lies on an $M$-compatible path starting at $u$ follows from condition (3) above.)

Now, by Lemma~\ref{lem:5}, $R_{k-m}$ satisfies condition $\A$, and hence, by \eqref{eq:u-v} and \eqref{eq:co-u-v},
\begin{multline}
\label{eq:key1}
6a_{k-m}+2\leq d_{R_{k-m}}(x_{i_0+1})+d_{R_{k-m}}(y_{j_0-1})+d_{R_{k-m}}(u)+d_{R_{k-m}}(v)\\
=(d_{R_{k-m}}^-(x_{i_0+1})+d_{R_{k-m}}^+(y_{j_0-1}))+(d_{R_{k-m}}^-(u)+d_{R_{k-m}}^+(v))\\
+(d_{R_{k-m}}^+(x_{i_0+1})+d_{R_{k-m}}^-(y_{j_0-1})+d_{R_{k-m}}^+(u)+d_{R_{k-m}}^-(v))\\
=(d_{D_{k-m}}^-(x_{i_0+1})+d_{D_{k-m}}^+(y_{j_0-1}))+(d_{D_{k-m}}^-(u)+d_{D_{k-m}}^+(v))+(d_{R_{k-(m-1)}}^-(u)+d_{R_{k-(m-1)}}^+(v))\\
+(d_{R_{k-m}}^+(x_{i_0+1})+d_{R_{k-m}}^-(y_{j_0-1})+d_{R_{k-m}}^+(u)+d_{R_{k-m}}^-(v))\\
\leq (d_{D_{k-m}}^-(x_{i_0+1})+d_{D_{k-m}}^+(y_{j_0-1}))+(c_{k-m}-\mu+1)+2a_{k-(m-1)}+4a_{k-m}\,.
\end{multline}
Therefore,
\begin{multline}
\label{eq:key2}
d_{D_{k-m}}^-(x_{i_0+1})+d_{D_{k-m}}^+(y_{j_0-1})\geq 2(a_{k-m}-a_{k-(m-1)})-c_{k-m}+\mu+1\\
=2c_{k-m}-c_{k-m}+\mu+1=c_{k-m}+\mu+1\,.
\end{multline}

If the inequality in \eqref{eq:key2} is strict, then $C_{k-m}$ contains consecutive vertices $y_s\in Y\cap V(D_{k-m})$ and $x_{s+1}\in X\cap V(D_{k-m})$ such that $y_{j_0-1}x_{s+1},y_sx_{i_0+1}\in A(D)$. Then $R_{k-m}$ contains an $M$-compatible cycle
\[
[x_{s+1},\dots,y_{i_0},u,\dots,v,x_{j_0},\dots,y_s,x_{i_0+1},\dots,y_{j_0-1}]
\]
of length strictly greater than $|C_{k-m}|$, which contradicts the maximality of $C_{k-m}$ in $R_{k-m}$. If, in turn, the two sides of \eqref{eq:key2} are equal, then we also have equality in \eqref{eq:key1}. In particular, $d_{R_{k-(m-1)}}^+(v)=a_{k-(m-1)}$, hence $vu\in A(D)$. Then $P'-V(D_{k-m})$ can be extended to an $M$-compatible cycle in $R_{k-(m-1)}$, contradicting condition (4) above.
\medskip

To complete the proof of Theorem~\ref{thm:main}, it remains to consider the cases when $m=0$ and $V^r\neq\varnothing$, or $m=1$ and $V^r=\varnothing$. If $m=0$ and $V^r\neq\varnothing$, then $u,v\in V^r$, and hence $P$ is, in fact, the arc $uv$ itself. Same as above, the strict inequality in \eqref{eq:key2} implies that $C_k=C_{k-m}$ can be extended to an $M$-compatible cycle of strictly greater length, which contradicts the choice of $C_k$. Therefore, both sides of \eqref{eq:key2} (hence also of \eqref{eq:key1}) are equal. In particular, $d_{R_k}^+(u)=d_{R_k}^-(v)=a_k$. It follows that $u$ dominates every vertex of $Y\cap V(C_k)$, $v$ is dominated by every vertex of $X\cap V(C_k)$, and $vu\in A(D)$. Thus, $R_k$ contains, for example, a cycle $C'=[x_1,v,u,y_1,x_2,\dots,y_{c_k}]$ of length $c_k+1$. Since $C'$ passes through all the vertices of $R_k$, it defines a complete matching from $X\cap V(R_k)$ to $Y\cap V(R_k)$. But $|C'|>|C_k|$, which contradicts the choice of $C_k$ (condition (ii) in \ref{subsec:cycles}).

Finally, suppose that $m=1$ and $V^r=\varnothing$. Then $u,v\in V(C_k)$. As above, we must have equality in \eqref{eq:key2}, hence also in \eqref{eq:key1}. In particular, $d_{R_{k-m}}^+(x_{i_0+1})=a_{k-m}$, $d_{R_{k-(m-1)}}^-(u)=a_{k-(m-1)}$, and $d_{R_{k-m}}^+(u)=a_{k-m}$.
Let $v'\in Y\cap V(D_k)$ denote the successor of $u$ on $C_k$ (it may be that $v'=v$). Then the above degree conditions imply that the arcs $x_{i_0+1}v'$, $v'u$ and $uy_{i_0+1}$ are all in $A(D)$. Consequently, $R_{k-1}$ contains a cycle $C''=[x_{i_0+1},v',u,y_{i_0+1},\dots,y_{i_0}]$ of length $c_{k-1}+1$. By assumption, $C_{k-1}$ is compatible with a complete matching $M$ from $X\cap V(R_{k-1})$ to $Y\cap V(R_{k-1})$. However, one can define a new complete matching $M'$ as follows: let $x_{i_0+1}v',uy_{i_0+1}\in M'$ and let $M'$ coincide with $M$ on $V(R_{k-1})\setminus\{x_{i_0+1},y_{i_0+1},u,v'\}$. Then $C''$ is a cycle in $R_{k-1}$ compatible with a complete matching from $X\cap V(R_{k-1})$ to $Y\cap V(R_{k-1})$ (namely, $M'$) and of length strictly greater than $c_{k-1}$, which contradicts the choice of $C_{k-1}$ (condition (ii) in \ref{subsec:cycles}). \qed

\medskip
\bibliographystyle{amsplain}

\begin{thebibliography}{99}

\bibitem {AA} J. Adamus and L. Adamus,
 \textit{A degree condition for cycles of maximum length in bipartite digraphs}, Discrete Math. \textbf{312} (2012), 1117--1122.

\bibitem {AM} D. Amar and Y. Manoussakis,
 \textit{Cycles and paths of many lengths in bipartite digraphs}, J. Combin. Theory Ser. B \textbf{50} (1990), 254--264.

\bibitem {B} C. Berge,
 ``Graphs and hypergraphs'', North-Holland, Amsterdam, 1973.

\bibitem {BT} J.C. Bermond and C. Thomassen,
 \textit{Cycles in digraphs - a survey}, J. Graph Theory \textbf{5} (1981), 1--43.

\bibitem {G} A. Ghouila-Houri,
 \textit{Une condition suffisante d'existence d'un circuit hamiltonien}, C. R. Acad. Sci. Paris \textbf{251} (1960), 495–-497.

\bibitem {MM} Y. Manoussakis and I. Milis,
 \textit{A sufficient condition for maximum cycles in bipartite digraphs}, Discrete Math. \textbf{207} (1999), 161--171.

\bibitem {Mey} M. Meyniel,
 \textit{Une condition suffisante d'existence d'un circuit Hamiltonien dans un graphe orient{\'e}}, J. Combinatorial Theory Ser. B \textbf{14} (1973), 137–-147.

\bibitem {NW} C.St.J.A. Nash-Williams,
 \textit{Hamilton circuits in graphs and digraphs}, in ``The many facets of graph theory'', Springer, Lecture Notes \textbf{110} (1969), 237--243.

\bibitem {W} D.R. Woodall, \textit{Sufficient conditions for circuits in graphs}, Proc. London Math. Soc. \textbf{24} (1972), 739--755.

\end{thebibliography}

\end{document}